\newtheorem{thm}{Theorem}
\newtheorem{prop}[thm]{Proposition}
\newtheorem{lem}[thm]{Lemma}
\newtheorem{cor}[thm]{Corollary}
\theoremstyle{remark}
\newtheorem{remark}[thm]{Remark}
\newtheorem{example}[thm]{Example}
\theoremstyle{definition}
\newtheorem{defn}[thm]{Definition}
\newtheorem{question}[thm]{Question}
\numberwithin{thm}{section}
\let\nc\newcommand
\let\renc\renewcommand
\newcommand{\Lmod}[1]{#1\text{-}{\mathsf{mod}}}
\newcommand{\LMod}[1]{#1\text{-}{\mathsf{Mod}}}
\DeclareMathOperator{\res}{{\mathbf{res}}}
\DeclareMathOperator{\Ext}{\mathrm{Ext}}
\DeclareMathOperator{\End}{\mathrm{End}}
\DeclareMathOperator{\Rep}{\mathrm{Rep}}
\DeclareMathOperator{\Stab}{\mathrm{Stab}}
\DeclareMathOperator{\QCoh}{\mathrm{QCoh}}
\DeclareMathOperator{\Coh}{\mathrm{Coh}}
\DeclareMathOperator{\ind}{\mathbf{ind}}
\newcommand{\beq}{\begin{equation}\label}
\newcommand{\eeq}{\end{equation}}
\newcommand{\iso}{{\;\stackrel{_\sim}{\to}\;}}
\DeclareMathOperator{\Hom}{\mathrm{Hom}}
\nc{\Z}{\mathbb{Z}}
\newcommand{\N}{\mathbb{N}}
\newcommand{\Q}{\mathbb{Q}}
\newcommand{\C}{\mathbb{C}}
\nc{\rank}{\textrm{rank} \,}
\nc{\ds}{\dots}
\let\mc\mathcal
\let\mf\mathfrak
\nc{\mbf}{\mathbf}
\nc{\LK}{\textsf{Irr}(K)}
\nc{\LW}{\textsf{Irr}(W)}
\nc{\Res}{\mathsf{Res} \, }
\nc{\Ind}{\mathsf{Ind} \, }
\nc{\cont}{\textrm{cont}}
\nc{\eWb}{\mathbf{e}_{W_b}}
\nc{\sing}{\mathrm{sing}}
\nc{\msf}{\mathsf}
\nc{\Ui}{\mc{U}_{i,+}}
\nc{\Uone}{\mc{U}_{1,+}}
\nc{\Utwo}{\mc{U}_{2,+}}
\nc{\minusone}{-1}
\nc{\minustwo}{-2}
\nc{\minusthree}{-3}
\nc{\Mod}{\mathrm{Mod} \,}
\nc{\ms}{\mathscr}
\nc{\Frac}{\mathrm{Frac} \,}
\nc{\ra}{\rightarrow}
\nc{\hra}{\hookrightarrow}
\nc{\lab}{\label}
\nc{\wt}{\widetilde}
\nc{\Tan}{\Theta}
\nc{\ul}{\underline}
\nc{\s}{\mathfrak{S}}
\nc{\g}{\mf{g}}
\nc{\pa}{\partial}
\nc{\tit}{\textit}
\nc{\Maxspec}{\mathrm{Maxspec} \, }
\nc{\gldim}{\mathrm{gl.dim}}
\nc{\rkm}{\mathrm{rk} \, (\mf{m})}
\nc{\sm}{\mathrm{sm}}
\nc{\PD}{\mathbb{PD}}
\nc{\Hilb}{\textrm{Hilb}}
\nc{\id}{\msf{id}}
\nc{\A}{\mathbb{A}}
\nc{\Grel}{\mc{G}^{\mathrm{rel}}}
\nc{\Grat}{\mc{G}^{\mathrm{rat}}}
\nc{\Squo}[1]{\A^{(#1)}}
\nc{\twist}{\mathrm{twist}}
\nc{\Cd}{\mc{C}}
\nc{\Span}{\mathrm{Span}}
\nc{\Grass}{\mathrm{Gr}}
\nc{\Grad}{\mathrm{Gr}^{ad}}
\nc{\V}{\mc{V}}
\nc{\Y}{\mathbb{Y}}
\nc{\rightsim}{\stackrel{\sim}{\longrightarrow}}
\nc{\Psq}{\Ps^2_q}
\nc{\Ps}{\mathbb{P}}
\nc{\prim}{\mathrm{prim}}
\renc{\o}{\otimes}
\renc{\H}{\mathsf{H}}
\nc{\git}{/\!\!/}
\newcommand{\dd}{{\mathscr{D}}}
\newcommand{\Supp}{\mathrm{Supp}}
\newcommand{\Cs}{\C^{\times}}
\newcommand{\bD}{\mathbf{D}}
\newcommand{\QC}{\mathbf{QC}}
\newcommand{\cF}{\mathcal{F}}
\newcommand{\cP}{\mathcal{P}}
\newcommand{\cU}{\mathcal{U}}
\newcommand{\cX}{\mathcal{X}}
\newcommand{\cY}{\mathcal{Y}}
\newcommand{\cZ}{\mathcal{Z}}
\newcommand{\bP}{\mathbf{P}}
\begin{document}


\title{Projective generation for equivariant $\dd$-modules}

\author{Gwyn Bellamy}
\address{School of Mathematics and Statistics, University of Glasgow, University Place, Glasgow, G12 8QQ.}
\email{gwyn.bellamy@glasgow.ac.uk}

\author{Sam Gunningham}
\address{The University of Edinburgh, School of Mathematics,
	James Clerk Maxwell Building,
	Peter Guthrie Tait Road,
	Edinburgh, EH9 3FD, UK}
\email{sam.gunningham@ed.ac.uk}

\author{Sam Raskin}
\address{Department of Mathematics
	The University of Texas at Austin
	2515 Speedway, PMA 8.100
	Austin, TX 78712}
\email{sraskin@math.utexas.edu}

\begin{abstract}
	We investigate compact projective generators in the category of equivariant $D$-modules on a smooth affine variety. For a reductive group $G$ acting on a smooth affine variety $X$, there is a natural countable set of compact projective generators indexed by finite dimensional representations of $G$. We show that 
	only finitely many of these objects are required to generate; thus the category has a single compact projective generator. The proof in the general case goes via an analogous statement about compact generators in the equivariant derived category, which holds in much greater generality and may be of independent interest. We also provide an alternative (more elementary) proof in the case that $G$ is a torus.
\end{abstract}

\maketitle

\section{Introduction}
Let $X$ be a smooth complex affine variety equipped with an action of a complex reductive group $G$. Let $\dd_X$ denote the ring of (algebraic) differential operators on $X$. We consider the category $\QCoh(\dd_X,G)$ of (strongly) $G$-equivariant $\dd_X$-modules; see Section \ref{sec:projeobjectinequivactD} for the definition.

Consider the $\dd_X$-module
\[
\mc P_X = \dd_X /\dd_X \mf g,
\]
where $\mf g \to \dd_X$ is the infinitesimal action map. This object is naturally $G$-equivariant. It represents the functor of invariants on $\QCoh(\dd_X, G)$; known as the functor of quantum Hamiltonian reduction. In particular, it is a compact projective object. One may think of $\mc P_X$ as global differential operators on the quotient stack $[X/G]$. 

Recall that a smooth variety $Y$ is called $D$-affine if the object $\dd_Y$ is a projective generator of the category $\QCoh(\dd_Y)$. Analogously, we say that the $G$-space $X$ (or rather the stack $[X/G]$) is \textit{$D$-affine} if $\mc P_X$ generates the category $\QCoh(\dd_X,G)$, i.e. if every equivariant module has a non-zero invariant element. In this case, we have an equivalence of categories
\[
\Hom(\mc P_X, - ): \QCoh(\dd_X, G) \xrightarrow{\sim} \LMod{R_X}
\]
where $R_X = \End(\mc P_X) \cong (\mc P_X)^G$.

In general, it is a subtle problem to determine whether a given affine $G$-variety is $D$-affine:
	\begin{itemize}
		\item The adjoint action of $GL_n$ on $\mf{gl}_n$ is $D$-affine.
		\vspace{1mm}
		\item The adjoint action of $SL_n$ on $\mf{sl}_n$ is not $D$-affine.
		\vspace{1mm}
		\item The scaling action of $\C^\times$ on $\mathbb{A}^1$ is not $D$-affine.
	\end{itemize}

Thus, instead of asking whether a particular projective module is a compact generator for $\QCoh(\dd_X,G)$, a more fruitful approach to understanding this category would be to ask whether it admits \textit{some} compact projective generator.   

\begin{question}\label{thequestion0}
	 For any given reductive group $G$ and smooth affine $G$-variety $X$, does the category $\QCoh(\dd_X,G)$ admit a compact projective generator? 
\end{question}

A positive answer to this question would imply that there exists a Noetherian $\C$-algebra $R$ such that $\QCoh(\dd_X,G) \simeq \LMod{R}$. This implies for instance that $\QCoh(\dd_X,G)$ admits a finite block decomposition, which is a priori not obvious. 

\subsection{Induced modules}

Let $\Rep(G)$ denote the category of finite dimensional representations of $G$. A countable projective generating set for $\QCoh(\dd_X,G)$ can be constructed by inducing representations of $G$.  More specifically, given $V \in \Rep(G)$ we define
\[
\mc P_X(V) := \dd_X \otimes_{\mc U \mf g} V
\]
These objects represent the functor which assigns the $V$-multiplicity space of an equivariant $\dd_X$-module; as such, they are compact and projective. It is clear from this characterization that the collection of objects $\mc P_X(V)$ generate $\QCoh(\dd_X,G)$, as $V$ ranges over the set of isomorphism classes of irreducible representations. 


Question \ref{thequestion0} is equivalent to:

\begin{question}\label{thequestion}
	For any given reductive group $G$ and smooth affine $G$-variety $X$, does there exist a finite dimensional representation $V \in \Rep(G)$ such that $\mc P_X(V)$ generates $\QCoh(\dd_X,G)$? 
\end{question}

We will show that the answer to Question \ref{thequestion} (and thus also to Question \ref{thequestion}) is yes.

In particular, we have that 
\[
\QCoh(\dd_X,G) = \LMod{R_X(V)}
\]
where $R_X(V) = \End_{\QCoh(\dd_X,G)}(\mc P_X(V))^{\mathrm{op}}$. Concretely, this means that for each smooth affine $G$-variety $X$, there is a finite set $\{V_1, \ldots, V_k\}$ of irreducible representations of $G$ such that each equivariant $\dd$-module $\ms{M}$ contains some $V_i$ appearing with non-zero multiplicity. Here we consider $\ms{M}$ as a $G$-module using the equivariant structure. 


\begin{remark}
	Note that the answer to the analogous question for the category $\QCoh(\mc O_X,G)$ of equivariant quasi-coherent sheaves on $X$ is a clear no. Equivariant structures on skyscraper sheaves at a point $x\in X$ correspond to representations of $H = Stab_G(x)$. Thus, as long as there is a point where the stabilizer $H$ contains a torus, the category $\QCoh(\mc O_X,G)$ cannot admit a compact projective generator.
\end{remark}

\subsection{Main Results}
The main purpose of this paper is to answer Question~\ref{thequestion} (in the affirmative).

\begin{thm}\label{thm:mainprofconj}
	Let $X$ be a smooth affine $G$-variety.	Then there exists $V \in \Rep(G)$ such that $\mc P_X(V)$ generates $\QCoh(\dd_X,G)$. 
\end{thm}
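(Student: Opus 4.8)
The plan is to reduce Theorem~\ref{thm:mainprofconj} to a finiteness statement about \emph{compact} generation of a derived category, and to treat the torus case by a separate, hands-on argument. Three formal reductions come first. (i) Since $\dd_X\otimes_{\mc U\mf g}(V\oplus V')\cong \mc P_X(V)\oplus\mc P_X(V')$, producing one generating $V$ is the same as producing finitely many $\mc P_X(V_1),\dots,\mc P_X(V_k)$ that together generate $\QCoh(\dd_X,G)$. (ii) Each $\mc P_X(V)$ is projective in $\QCoh(\dd_X,G)$, so $\Ext^{>0}(\mc P_X(V),-)=0$ there; hence a finite family $\{\mc P_X(V_i)\}$ generates the abelian category provided it compactly generates the unbounded derived category $\mc D:=D(\QCoh(\dd_X,G))$ --- for an object $N$ of the heart the groups $\Hom_{\mc D}(\mc P_X(V_i)[n],N)$ vanish for $n>0$ by definition of the heart, for $n<0$ by projectivity, and for $n=0$ by the hypothesis $\Hom(\mc P_X(V_i),N)=0$, so $N=0$. (iii) The family $\{\mc P_X(V)\}_{V\in\Rep(G)}$ is already a set of compact \emph{generators} of $\mc D$ (it is a set of projective generators of the heart, which has enough projectives). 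By the standard principle that, in a compactly generated triangulated category, every compact object lies in the thick subcategory generated by some finite subset of a fixed set of compact generators, it therefore suffices to exhibit \emph{any} finite set of compact generators of $\mc D$.

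This last assertion needs neither the particular objects $\mc P_X(V)$ nor smoothness of $X$: for every affine variety $X$ with an action of a reductive group $G$, the derived category of $G$-equivariant $\dd$-modules on $X$ --- equivalently of $\dd$-modules on the quotient stack $[X/G]$ --- should admit a single compact generator. I would prove this by induction on the pair $(\dim G,\dim X)$, ordered lexicographically, using two inputs. The first is the recollement of $\dd$-module categories attached to a $G$-stable open $j\colon U\hookrightarrow X$ with closed complement $i\colon Z\hookrightarrow X$: since for $\dd$-modules both $i_*$ and $j_!$ preserve compactness, finite compact generation for $[Z/G]$ and for $[U/G]$ implies it for $[X/G]$, by gluing the images under $i_*$ and $j_!$ of generators of the two pieces. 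The second is Luna's \'etale slice theorem, applied on the principal orbit-type stratum. If $G^\circ$ acts trivially on $X$, then strong equivariance forces the $\mathrm{Lie}(G^\circ)$-action on every module to vanish, whence $\QCoh(\dd_X,G)\simeq\QCoh(\dd_X,\pi_0 G)$ and $\dd_X\rtimes\pi_0 G$ is a single compact (indeed projective) generator. Otherwise the principal stratum $U\subseteq X$ is open and dense, $Z=X\setminus U$ has smaller dimension, and points of $U$ have stabilizer conjugate to a reductive subgroup $H$ with $\dim H<\dim G$; Luna's theorem presents $[U/G]$, \'etale-locally over the affine quotient $U\git G$, by stacks of the form $[S/H]$ with $S$ affine, to which the inductive hypothesis applies. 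Feeding the pieces back through the recollement completes the induction.

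The main obstacle is exactly the last step: upgrading ``compactly generated'' to ``\emph{finitely} compactly generated'' through the (cosimplicial) \'etale descent along $U\git G$. That $\dd$-modules on quotient stacks are compactly generated is known, but propagating the finiteness requires a genuine argument --- one must truncate the descent, exploiting that over each chart the category is a module over a fixed category of $\dd$-modules on a scheme, with the fibre directions ($\cong G/H$) contributing, after strong equivariance, only the finitely many components of an appropriate group of connected components. Isolating the right class of stacks on which this ``finite type'' property holds and is stable under the operations above is the general result referred to in the abstract.

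When $G=T$ is a torus the derived category can be avoided. Using that $T$ is linearly reductive and $X$ affine, embed $X$ $T$-equivariantly as a closed subvariety $i\colon X\hookrightarrow V$ of a $T$-representation $V$; Kashiwara's equivalence realizes $\QCoh(\dd_X,T)$ as a full subcategory of $\QCoh(\dd_V,T)$ via $i_*$, under which the multiplicity functors match up (up to a fixed character twist), so it suffices to treat $X=V$. For a $T$-representation the orbit-type stratification is by coordinate subspaces, so the relevant slices are linear and the induction above can be run entirely Zariski-locally, with no \'etale descent --- this is what makes the argument elementary. Concretely, a strongly $T$-equivariant $\dd_V$-module is an $\mathbb{X}^*(T)$-graded module $\ms M=\bigoplus_\mu \ms M_\mu$ over the graded ring $\dd_V=\C[x_1,\dots,x_n]\langle\partial_1,\dots,\partial_n\rangle$ (with $\mathrm{wt}\,x_i=\lambda_i$) on which the quantized moment operators $E_Y=\sum_i\langle\lambda_i,Y\rangle\,x_i\partial_i$, $Y\in\mathrm{Lie}(T)$, act on $\ms M_\mu$ by the scalar $\langle\mu,Y\rangle$; analyzing these relations stratum by stratum produces a finite set of ``essential'' weights whose span is a $W$ with $\mc P_V(W)$ a generator.
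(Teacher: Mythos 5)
Your overall strategy is the same as the paper's: reduce the abelian statement to finite compact generation of an equivariant derived category, prove the latter by stratifying $X$ via Luna slices and gluing generators along open--closed decompositions, and give a separate elementary argument for tori. The formal reductions (i)--(iii) and the gluing lemma are essentially Proposition 3.2, the Neeman--Thomason step, and Lemma 7.4 of the paper. However, two points need attention.

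First, a category mismatch. You run steps (ii)--(iii) in $D(\QCoh(\dd_X,G))$, the naive derived category of the heart, but the stratification/recollement machinery (Kashiwara, Drinfeld--Gaitsgory, six functors) lives in the genuine equivariant derived category $\bD([X/G])$, and these are not equivalent in general: already for $X=\mathrm{pt}$ one has $\bD(BG)\simeq C_\ast(G)\text{-mod}$, not $D(\Rep(G))$. Moreover the compact object of $\bD([X/G])$ attached to $V$ is $\bP_X(V)=\ind(V\otimes\mc O_X)$, which is not $\mc P_X(V)$ placed in degree zero. The correct bridge is the paper's Lemma \ref{lem:derived vs abelian}: the adjunction $\Hom_{\bD(X)^G}(\bP_X(V),\ms M[k])\cong\Ext^k_{\QC(X)^G}(V\otimes\mc O_X,\res\ms M)$ shows $\bP_X(V)$ behaves like a projective relative to the heart, so a finite set of $\bP_X(V_i)$ generating $\bD(X)^G$ forces the $\mc P_X(V_i)$ to generate $\QCoh(\dd_X,G)$. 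Your argument should be routed entirely through $\bD([X/G])$ with this lemma replacing the appeal to projectivity in $D(\QCoh(\dd_X,G))$.

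Second, and more seriously, the step you yourself flag as ``the main obstacle'' is a genuine gap, and it is exactly where the content of the theorem lies. Luna's slice theorem produces an \'etale surjection $G\times_H S\to U$ which is not finite, so one cannot push a compact generator forward along it, and \v{C}ech/cosimplicial descent does not by itself preserve the property of having a \emph{single} compact generator. The paper closes this gap in two ways: (a) by refining the stratification using Hesselink's stratification of the nullcone so that each stratum is covered appropriately, and (b) more robustly, by invoking Drinfeld--Gaitsgory (\cite{drinfeld_finiteness_2013}, Lemma 10.3.9), which supplies on a dense open substack a \emph{finite} \'etale cover that is a unipotent gerbe over $X\times BG$; finiteness of the cover makes $f^\ast$ conservative so that $f_\ast$ of a compact generator is again one, the gerbe induces an equivalence of $D$-module categories, and $X\times BG$ has a compact generator by Bondal--Van den Bergh for $X$ and the identification $\bD(BG)\simeq C_\ast(G)\text{-mod}$ for $BG$. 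Without some such input your induction does not close. The torus sketch is in the right spirit (it matches Section \ref{sec:torus-case}: Kashiwara for modules killed by some $x_k^N$, Fourier transform for modules killed by some $\partial_k^N$, and the coset-of-weights argument otherwise), but as written it is a sketch rather than a proof.
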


Analogous to the objects $\cP_X(V)$ in $\QCoh(\dd_X,G)$, we have a family of compact objects $\bP_X(V)$ of the triangulated category $\bD(X)^G$; see Section \ref{sec:compactgenerderived} for further details. The proof of Theorem \ref{thm:mainprofconj} is via the following analogous statement in the equivariant derived category:  

\begin{thm}\label{thm:derived}
	If $G$ is a reductive group, and $X$ a smooth affine $G$-variety, there is a finite dimensional representation $V$ such that $\bP_X(V)$ is a compact generator of $\bD(X)^G$.
\end{thm}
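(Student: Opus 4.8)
\subsection*{Proof proposal}

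The plan is to first produce \emph{some} single compact generator of $\bD(X)^G$, and then to upgrade it to one of the stated form. The upgrade is formal: by Section~\ref{sec:compactgenerderived} the family $\{\bP_X(V):V\in\Rep(G)\}$ consists of compact objects and generates $\bD(X)^G$, so any compact object — in particular any single compact generator $Q$ — lies in the thick subcategory generated by finitely many of them, say $\bP_X(V_1),\dots,\bP_X(V_n)$; then $\bP_X(V_1\oplus\cdots\oplus V_n)\cong\bigoplus_i\bP_X(V_i)$ is compact, its thick closure contains $Q$, and hence its localizing subcategory is all of $\bD(X)^G$. It thus suffices to exhibit a single compact generator by any means, and to make the induction below run smoothly it is convenient to prove: \emph{for every reductive $G$ and every finite-type $G$-variety $X$ — not necessarily affine or smooth — the category $\bD(X)^G$ admits a single compact generator.}

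The first ingredient is the gluing of compact generators along $G$-equivariant localization sequences. Given a $G$-stable closed $i\colon Z\hookrightarrow X$ with open complement $j\colon U\hookrightarrow X$, we have the recollement $\bD_Z(X)^G\rightleftarrows\bD(X)^G\rightleftarrows\bD(U)^G$ of compactly generated categories, in which $i_*$ and $j_!$ preserve compact objects. If $\bD_Z(X)^G$ and $\bD(U)^G$ have single compact generators $A$ and $B$, then so does $\bD(X)^G$: the class $[B\oplus B[1]]=0$ in $K_0(\bD(U)^{G,c})$ lifts, by Thomason--Neeman, to a compact object $\widetilde B\in\bD(X)^{G,c}$ with $j^*\widetilde B\cong B\oplus B[1]$, and a short chase through the recollement triangles shows that $i_*A\oplus\widetilde B$ generates $\bD(X)^G$. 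Together with the equivariant Kashiwara theorem $\bD_{Z_\alpha}(X)^G\cong\bD(Z_\alpha)^G$, this reduces the case of a reducible or singular $Z$ to that of smooth irreducible $G$-varieties of strictly smaller dimension, by peeling off the strata of a $G$-stable stratification.

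I would then induct on $\dim X$. For the inductive step, choose a dense $G$-stable open $U\subseteq X$ on which the $G$-action has a single orbit type, with stabilizer some affine subgroup $H\subseteq G$; then the orbit space $Y=U/G$ exists as a finite-type algebraic space and $[U/G]\to Y$ is, smooth-locally on $Y$, the projection $Y'\times BH\to Y'$. Now $\bD(BH)$ has a \emph{canonical} single compact generator for \emph{any} affine algebraic group $H$ — namely $\ell(\C)$, the image of $\C$ under the left adjoint $\ell$ of the $!$-restriction $\bD(BH)\to\bD(\mathrm{pt})$ along the smooth atlas $\mathrm{pt}\to BH$, a restriction that is continuous and, by smooth descent, conservative (so $\ell(\C)$ is compact and generates) — and $\bD(Y)$ has a single compact generator by the known case of (quasi-compact, quasi-separated) algebraic spaces. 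Descending the external product of these two generators along $Y$ — using that $\ell(\C)$, being canonical, is insensitive to the twisting of the $BH$-fibration — yields a single compact generator of $\bD(U)^G$. Since $\dim(X\setminus U)<\dim X$, the second paragraph then completes the step; the base case $\dim X=0$ is the instance $X\setminus U=\varnothing$, $[X/G]=BG$.

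The principal difficulty is the structural analysis of the generic stratum: establishing the $BH$-fibration description of $[U/G]\to Y$, and the attendant D-module descent statement, in the required generality — in particular allowing non-reductive stabilizers, as already occurs for $SL_2$ acting on $\mathbb{A}^2$, where the generic stratum is $B\mathbb{G}_a$ — together with checking throughout that the various identifications of D-module categories (Kashiwara, the fibration, the descent, the free quotient) are compatible with the recollements and preserve compact objects, so that the induction is genuinely well-founded. The $K_0$-obstruction to lifting a compact generator through a localization is a real subtlety, but it is sidestepped by the $B\oplus B[1]$ device above; everything else reduces to formal manipulations with compactly generated triangulated categories.
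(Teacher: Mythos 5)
Your overall strategy is the same as the paper's: stratify, produce compact generators stratum by stratum, glue along open--closed recollements, and then convert the resulting compact generator into one of the form $\bP_X(V)$ by observing that it lies in the thick subcategory generated by finitely many $\bP_X(V_i)$. That final conversion, the gluing lemma (your Thomason--Neeman $B\oplus B[1]$ device is a correct, standard alternative to the paper's use of Drinfeld--Gaitsgory's compact generation of $\bD(\cX)$ by a set), and the equivariant Kashiwara reduction are all fine. The problem is that the one step you defer as "the principal difficulty" --- the analysis of the generic stratum --- is exactly where the content of the theorem lives, and the sketch you give for it contains a genuine gap rather than a routine verification.

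Concretely: you assert that $\bD(BH)$ has a compact generator $\ell(\C)$ because the $!$-restriction along $\mathrm{pt}\to BH$ is "continuous and conservative (so $\ell(\C)$ is compact and generates)." Continuity and conservativity of $q^!$ do not by themselves give you the existence of a left adjoint $\ell=q_!$, nor its compatibility with compactness; if they did, the identical argument applied to the atlas $q\colon X\to [X/G]$ (which is smooth, affine, surjective, so $q^!$ is continuous and conservative) would produce a compact generator $q_!(\mathcal F)$ of $\bD(X)^G$ in one line, with no stratification at all --- and the theorem would be trivial, which it is not. For $BH$ specifically the conclusion is true, but it requires the identification $\bD(BH)\simeq C_\ast(H)\text{-mod}$ (Drinfeld--Gaitsgory, \S 7.2.2), not the formal argument you give. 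The same issue resurfaces in your descent step: "$\ell(\C)$, being canonical, is insensitive to the twisting of the $BH$-fibration" is not a proof. The gerbe $[U/G]\to Y$ is banded by a group scheme that is only locally constant, and descending a compact generator from a smooth cover of $Y$ requires pushing forward along that cover --- which preserves compactness and generation only when the cover is finite (so that pushforward is left adjoint to $!$-pullback). This is precisely why the structure result the paper invokes (\cite{drinfeld_finiteness_2013}, Lemma 10.3.9, restated as Lemma \ref{lem:drinfeld-gaitsgory}) produces a \emph{finite \'etale} cover trivializing the stratum as a unipotent gerbe over $X'\times BG'$, and why the paper's affine-case Proposition \ref{prop:stratification} goes through Luna slices and Hesselink's stratification instead. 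To close your argument you would need to either reprove a version of that structure lemma or cite it --- at which point your proof collapses onto the paper's.
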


In fact, Theorem \ref{thm:derived} is a special case of a more general statement. 

\begin{thm}\label{thm:dg compact generator}
	Let $\cX$ be an Artin stack of finite type with affine diagonal. Then derived category of $D$-modules $\bD(\cX)$ has a compact generator. 
\end{thm}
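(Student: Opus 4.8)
The plan is to build a compact generator for $\bD(\cX)$ by dévissage along a stratification of $\cX$ into quotient stacks, using the fact that the derived category of $D$-modules on a quotient stack $[Y/H]$ with $Y$ smooth affine and $H$ reductive (or even just linear algebraic) has a compact generator — which is essentially the content of Theorem~\ref{thm:derived} together with standard facts about $\bD([Y/H])$. First I would reduce to the case $\cX$ is reduced, since $\bD(\cX) = \bD(\cX_{\red})$. Then, since $\cX$ is an Artin stack of finite type with affine diagonal, I would invoke a suitable stratification result: $\cX$ admits a finite filtration by open substacks $\emptyset = \cU_0 \subset \cU_1 \subset \cdots \subset \cU_n = \cX$ such that each locally closed stratum $\cZ_i = \cU_i \setminus \cU_{i-1}$ is a quotient stack $[Y_i/\GL_{m_i}]$ with $Y_i$ smooth quasi-affine — this is a consequence of Kresch's stratification theorem for Artin stacks with affine stabilizers (combined with generic slice arguments, working our way down by dimension). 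We may further refine so each $Y_i$ is smooth affine, shrinking strata as needed.

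The core inductive step goes as follows. Suppose $j \colon \cU \hookrightarrow \cX$ is an open substack with closed complement $i \colon \cZ \hookrightarrow \cX$, and suppose both $\bD(\cU)$ and $\bD(\cZ)$ have compact generators $P_\cU$ and $P_\cZ$ respectively. There is a recollement (localization sequence)
\[
\bD(\cZ) \xrightarrow{\ i_* \ } \bD(\cX) \xrightarrow{\ j^* \ } \bD(\cU),
\]
with $j^*$ admitting the fully faithful right adjoint $j_*$ and left adjoint $j_!$, and $i_*$ fully faithful. The standard argument for gluing compact generators across a recollement then produces a compact generator of $\bD(\cX)$: take $P_\cX = i_* P_\cZ \oplus \wt{P}$, where $\wt{P}$ is a compact object of $\bD(\cX)$ with $j^* \wt{P} \cong P_\cU$. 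The existence of such a compact lift $\wt{P}$ requires that $j^* \colon \bD(\cX) \to \bD(\cU)$ sends compacts to compacts and that $\bD(\cX)$ is compactly generated — one then checks, using that $j_!$ is fully faithful on the image and that $i_*$ kills nothing in the kernel, that an object $M \in \bD(\cX)$ with $\Hom(P_\cX[n], M) = 0$ for all $n$ must have $j^*M = 0$ (by generation on $\cU$) hence $M \cong i_* i^! M$ (or $i_* i^* M$), and then vanishes by generation on $\cZ$. Iterating this over the strata of the filtration, starting from the bottom stratum (a single quotient stack, hence covered by Theorem~\ref{thm:derived}), yields a compact generator of $\bD(\cX)$.

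The two ingredients I need to justify carefully are: (i) that $j^*$ preserves compactness (equivalently, that $j_*$ preserves colimits), which holds because $j$ is an open immersion — $j^*$ has both adjoints $j_!$ and $j_*$, and $j_! j^*$-type adjunctions show $j^*$ is continuous with continuous right adjoint; and (ii) the base case, $\bD([Y/H])$ has a compact generator for $Y$ smooth affine and $H$ a linear algebraic group, which follows from Theorem~\ref{thm:derived}: writing $H = L \ltimes U$ with $L$ reductive and $U$ unipotent, one reduces (via the $U$-equivariant induction, which is an equivalence onto a full subcategory or via quasi-isomorphism of the relevant stacks up to affine fibration) to the reductive case, and then $\bD([Y/L]) = \bD(Y)^L$ is generated by $\bP_Y(V)$ for suitable $V$. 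One must also handle the passage from quasi-affine strata $Y_i$ to affine ones, but since $\bD$ of a quasi-affine smooth variety with $H$-action can be presented via an $H$-stable affine open cover and a further recollement argument, this is absorbed into the same inductive scheme.

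The main obstacle I anticipate is \emph{not} the recollement gluing — that is formal once compactness of $j^*$ is in hand — but rather securing the stratification with the right properties: one needs each stratum to be (or to be refined into) quotient stacks $[Y/H]$ with $Y$ smooth \emph{affine}, not merely quasi-affine, and with $H$ linear algebraic, and one needs the strata to fit into a filtration by opens. Kresch's theorem gives quotient-stack strata with quasi-affine $Y$ and $H = \GL_n$; extracting smooth affine charts equivariantly, and simultaneously ensuring the closure relations assemble into an honest filtration by open substacks (so that the inductive recollement applies), is where the real bookkeeping lies. A secondary subtlety is that $\bD(\cX)$ for an Artin stack is defined as a limit/colimit over a smooth atlas and one should confirm it is genuinely compactly generated (presentable, and that the generators constructed are compact as objects of this limit category) — this is where the \emph{affine diagonal} hypothesis is used, guaranteeing that pushforward along the diagonal and along representable affine morphisms behaves well enough for the compactness arguments to go through.
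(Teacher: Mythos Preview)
Your skeleton is the same as the paper's: stratify $\cX$, then glue compact generators across open--closed decompositions via recollement. Your gluing step (take $i_*P_\cZ \oplus \wt P$ with $\wt P$ a compact lift of $P_\cU$, using compact generation of $\bD(\cX)$ from Drinfeld--Gaitsgory to produce the lift) is exactly the paper's Lemma~\ref{lem:open closed}. The genuine difference is in the stratification and base case. The paper does \emph{not} invoke Kresch or Theorem~\ref{thm:derived}; instead it quotes Drinfeld--Gaitsgory's structural Lemma 10.3.9, which furnishes, on a dense open of any such $\cY$, a finite \'etale cover that is a unipotent gerbe over $X\times BG$ with $X$ a scheme and $G$ reductive. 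The base case is then $\bD(X)$ (Bondal--Van den Bergh plus the $\ind/\res$ adjunction) and $\bD(BG)\simeq C_*(G)\text{-mod}$, handled separately in Lemma~\ref{lem:special case}; the unipotent gerbe and finite \'etale cover transport the generator back. What this buys the paper: no need to wrestle Kresch's strata into smooth \emph{affine} quotients $[Y/\GL_n]$ (the step you correctly flag as the real obstacle), and the proof of Theorem~\ref{thm:dg compact generator} becomes logically independent of Theorem~\ref{thm:derived}, so the latter is cleanly a special case of the former. What your route buys: it makes the link to Theorem~\ref{thm:derived} explicit and avoids quoting the somewhat heavier Drinfeld--Gaitsgory structure lemma, at the cost of the refinement work you describe (which is doable but not free, especially the passage from quasi-affine to affine equivariant charts).
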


\begin{remark}
	It is known that that $\bD(\cX)$ is compactly generated (see \cite{drinfeld_finiteness_2013}). The assertion above is that a single compact object suffices to generate.
\end{remark}

We present a proof of Theorem \ref{thm:dg compact generator} in Subsection \ref{sec:proof-of-theorem-refthmdg-compact-generator}. In Section~\ref{sec:compactgenerderived0} we give a self-contained proof of Theorem \ref{thm:derived} which we believe explains why such a generator exists. Namely, the variety $X$ can be stratify by locally closed subvarieties $X_i$, each of which admits a compact generator for the equivariant derived category. This collection is then used to build a compact generator for $X$.

We will also present a different proof in the case when $G$ is a torus in Section \ref{sec:torus-case} which does not require passing to the equivariant derived category.

\subsection{Computations}

Once one knows that the category $\QCoh(\dd_X,G)$ admits a compact generator induced from a representation $V$, it begs the question of which $V$ can be chosen as a generator. A partial answer is given by an easy application of Luna's slice theorem which shows that to find a generating representation $V$, one can reduce to studying the nullcones for each Luna slice.

Luna's theorem associates to the pair $(X,G)$ a finite collection $C_G(X)$ of pairs $(H,N_H)$, where $H$ is a reductive subgroup of $G$ occurring as a stabilizer of a point $x$ (with closed $G$-orbit) in $X$ and $N_H$ is a $H$-stable complement to $\mf{g} \cdot x$ in $T_x X$. Let $N_H'$ denote the $H$-complement to $N_H^H$ in $N_H$. If $\mc{N}(N_H')$ denotes the nilcone in the $H$-representation $N_H'$ then we say that a $H$-representation $W$ is a projective generator relative to $\mc{N}(N_H')$ if $\Hom_{(\dd,H)}(\mc{P}_{N_H'}(W),M) \neq 0$ for all non-zero $H$-equivariant $\dd$-module on $N_H'$ supported on $\mc{N}(N_H')$. 

\begin{thm}\label{thm:mainprojgenslice}
	The representation $V$ is a projective generator of $\Coh(\dd_X,G)$ if and only if $V |_H$ is a projective generator relative to $\mc{N}(N_H')$ for all $(H,N_H) \in C_G(X)$. 
\end{thm}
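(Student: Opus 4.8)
The plan is to reduce the statement to a local question via Luna's slice theorem and then feed that reduction into the projective generation criterion. First I would recall the precise form of Luna's étale slice theorem: for a point $x \in X$ with closed $G$-orbit and stabilizer $H = \Stab_G(x)$, choosing an $H$-stable complement $N_H$ to $\mf g \cdot x$ inside $T_x X$, there is an $H$-equivariant (étale) correspondence relating a $G$-saturated neighbourhood of $Gx$ in $X$ with the associated bundle $G \times^H N_H$, and hence an equivalence between suitable categories of $G$-equivariant $\dd$-modules on that neighbourhood and $H$-equivariant $\dd$-modules on $N_H$. The key point is that this equivalence is compatible with the induced projective objects: under it $\mc P_X(V)$ restricted to the slice corresponds to $\mc P_{N_H}(V|_H)$ (up to the harmless twist by the associated-bundle functor, which is an equivalence $\QCoh(\dd_{N_H}, H) \simeq \QCoh(\dd_{G\times^H N_H}, G)$ sending $\mc P_{N_H}(W)$ to $\mc P_{G\times^H N_H}(\Ind_H^G W)$, and the relevant $\Hom$'s being computed after restriction).

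Next I would set up the two directions of the biconditional. For the ``only if'' direction: if $V$ generates $\Coh(\dd_X,G)$ but $V|_H$ fails to be a projective generator relative to $\mc N(N_H')$ for some $(H,N_H) \in C_G(X)$, then there is a nonzero $H$-equivariant $\dd$-module $M$ on $N_H'$ supported on the nilcone with $\Hom_{(\dd,H)}(\mc P_{N_H'}(W), M) = 0$ where $W = V|_H$. I would pull $M$ back along $N_H \to N_H'$ (pullback along the projection killing the fixed subspace $N_H^H$, which preserves the vanishing of $\Hom$ into it because the fixed directions contribute only a polynomial/$\dd$-module factor on which all multiplicity spaces behave predictably), then push it forward via the associated bundle and the slice equivalence to obtain a nonzero object of $\QCoh(\dd_X,G)$ with no copy of $V$ — contradicting that $V$ generates. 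Some care is needed to replace $M$ by a coherent (finitely generated) object and to ensure the resulting global object is nonzero and lies in $\Coh(\dd_X, G)$; holonomicity of modules supported on the nilcone and the fact that the slice equivalence is exact make this manageable.

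For the ``if'' direction: suppose $V|_H$ is a projective generator relative to $\mc N(N_H')$ for every $(H,N_H) \in C_G(X)$; I want to conclude $V$ generates $\QCoh(\dd_X,G)$. Take a nonzero $\ms M \in \QCoh(\dd_X,G)$; by Noetherian induction on the support (a $G$-stable closed subvariety of $X$), choose a point $x$ in a minimal stratum of the support with closed $G$-orbit, with stabilizer $H$ and slice data $(H,N_H) \in C_G(X)$. Restricting $\ms M$ to the Luna neighbourhood and transporting across the slice equivalence gives a nonzero $H$-equivariant $\dd$-module on $N_H$; analyzing it along the fixed directions $N_H^H$ versus $N_H'$ via the Fourier transform or Kashiwara's theorem reduces to a nonzero module on $N_H'$ supported on the nilcone $\mc N(N_H')$ (this is exactly where the minimality of the stratum and the structure of closed orbits in $N_H'$ — whose only closed orbit is the origin — get used). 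By hypothesis that module contains $V|_H$ with nonzero multiplicity; pushing back up, $\ms M$ contains $V$ with nonzero multiplicity, i.e. $\Hom(\mc P_X(V), \ms M) \neq 0$, so $\mc P_X(V)$ generates.

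The main obstacle I anticipate is the bookkeeping around the fixed-subspace splitting $N_H = N_H^H \oplus N_H'$: one must verify cleanly that a representation $W$ is a projective generator relative to $\mc N(N_H)$ for the full slice $N_H$ \emph{if and only if} it is one relative to $\mc N(N_H')$ on the smaller slice, i.e. that the polynomial/fixed directions contribute nothing essential. This should follow from the Fourier transform (exchanging the $\dd$-module $\mc O$ on $N_H^H$ with the delta module at the origin) together with Kashiwara's equivalence, identifying $H$-equivariant $\dd$-modules on $N_H$ supported appropriately with those on $N_H'$ in a way compatible with multiplicities of $H$-representations — but making this precise and checking that the nilcone condition transports correctly is the technical heart of the argument. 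A secondary subtlety is ensuring the whole discussion works at the level of (coherent) abelian categories rather than derived categories, since Theorem~\ref{thm:mainprojgenslice} is stated for $\Coh(\dd_X,G)$ and the generation statement for $\QCoh(\dd_X,G)$; the passage between the two uses that every quasi-coherent equivariant $\dd$-module is a filtered colimit of coherent ones and that $\mc P_X(V)$ is compact.
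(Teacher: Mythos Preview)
Your overall strategy matches the paper's: both reduce via Luna's slice theorem through the chain $X \leftarrow G \times_H S \leftrightarrow S \to N_H \supset N_H'$, and both identify the passage from $N_H$ to $N_H'$ (stripping off the trivial directions $N_H^H$) as the technical heart. The paper organises this as a chain of if-and-only-if statements through three lemmas: excellent morphisms preserve relative projective generation (Lemma~\ref{lem:projgenexcellentmap}), the associated-bundle equivalence carries $\Hom_G(V,-)$ to $\Hom_H(V|_H,-)$ (Lemma~\ref{lem:istarVrestrict}), and removing the fixed directions preserves the property (Lemma~\ref{lem:projgenreomveinvariants}). Your separate treatment of the two implications is fine and amounts to the same thing.

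The substantive gap is in your proposed mechanism for the fixed-direction step. You suggest Fourier transform or Kashiwara's theorem to pass from a module on $N_H = N_H^H \oplus N_H'$ supported on $\mc{N}(N_H') \times N_H^H$ down to $N_H'$. This does not work: such a module need not be an external tensor product, and in the $N_H^H$-direction it can be neither $\mc{O}$-like nor supported at a point, so neither Fourier transform nor Kashiwara gives a clean reduction. The paper's argument (Lemma~\ref{lem:projgenreomveinvariants}) is more elementary and avoids this entirely: given coherent $M$ on $N_H$ supported on $\mc{N}(N_H') \times N_H^H$, choose any nonzero finite-dimensional $H$-submodule $U \subset M$ and set $L' := \dd_{N_H'} \cdot U$; then $L'$ is a coherent $H$-equivariant $\dd_{N_H'}$-module supported on $\mc{N}(N_H')$, and $\Hom_H(V|_H,L') \subset \Hom_H(V|_H,M)$ gives the implication. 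The converse uses $L \boxtimes \dd_{N_H^H}$. A secondary wrinkle: in your ``if'' direction you speak of a \emph{minimal} stratum of the support; the paper instead takes the unique Luna stratum whose preimage meets $\Supp L$ in an open dense set, then picks $x$ with closed orbit and stabilizer $H$ there --- this is what ensures that after passing to the slice the module lands on the bottom stratum $\mc{N}(N_H') \times N_H^H$ of $N_H$, which is what feeds into Lemma~\ref{lem:projgenreomveinvariants}.
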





\subsection{Motivation}
The category $\QCoh(\dd_X,G)$ should be thought of as a quantization of the cotangent stack
\[
T^\ast([X/G]) = [\mu_X^{-1}(0) /G]
\]
where $\mu_X: T^\ast X \to \mf g^\ast$ is the moment map. Such examples arise naturally in geometric and symplectic representation theory.
Here are some motivating examples to keep in mind:
\begin{itemize}
	\item If $X=\mf g$ (or $G$) with the adjoint (conjugation) action, the category $\QCoh(\dd_X,G)$ is the home of Lusztig's character sheaves, studied in their $\dd$-module incarnation by Ginzburg. Of particular interest are the cuspidal $\dd$-modules, which correspond to the certain IC sheaves on distinguished nilpotent orbits. 
	\item In the case $G=GL_n$, one can modify the previous example by taking $X=\mf {gl}_n \times \C^n$. Objects of $\QCoh(\dd_X,G)$ are known as \emph{mirabolic} $\dd$-modules. A certain localization of this category is closely related to representations of the rational Cherednik algebra and the geometry of the Hilbert scheme of points in the plane.
	\item Given a quiver $Q = (Q_1, Q_0)$ and fixing a dimension vector $\alpha \in \N^{Q_0}$ gives rise to another example of a vector space 
	\[
	X = \bigoplus_{f\in Q_1} \Hom(\C^{\alpha_{s(f)}},\C^{\alpha_{t(f)}})
	\]
	equipped with the action of $G=\prod_{i\in Q_0} GL_{\alpha_i}$. Localizations of the category $\QCoh(\dd_X,G)$ give quantizations of Nakajima quiver varieties.
	\item In the case where $G=T$ acts linearly on a vector space $X$, localizations of the category $\QCoh(\dd_X,G)$ give quantizations of hypertoric varieties.
\end{itemize}
	
\subsection{Example: the adjoint representation and generalized Springer theory}
The category $\QCoh(\dd_{\mf g},G)$ for $G$ a connected semisimple group has been studied in \cite{GunnighamAbelian}. It was shown that there is a finite orthogonal decomposition of $\QCoh(\dd_{\mf g},G)$ indexed by the \emph{cuspidal data} associated to $G$ (in the sense of Lusztig's generalized Springer correspondence). Each block corresponding to a given cuspidal datum is of the form $\dd_{\mf z} \rtimes \Gamma$ where $\Gamma$ is a finite group acting by reflections on a vector space $\mf z$. In particular, there is a given set of projective generators of $\QCoh(\dd_{\mf g},G)$ indexed by cuspidal data.

On the other hand, there are finitely many irreducible representations $\{V_1, \ldots , V_k\}$ such that $\mc P_{\mf g}(V_1), \ldots, \mc P_{\mf g}(V_k)$ generate $\QCoh(\dd_{\mf g},G)$. It is not clear at all what the relationship is between these two sets of generators. The following examples give a sense of the nature of this case (see \cite{gunningham_generalized_} for further details):
\begin{enumerate}
	\item If $G=PGL_n$, then $\mc P_{\mf g}(\C)$ already generates the category (i.e. it is enough to take the trivial representation).
	\item If $G=SL_n$, then the fundamental representations (together with the trivial representations) form a minimal set of irreducible representations such that the corresponding $\dd$-modules generate.
	\item In general, $\mc P_{\mf g}(\C)$ generates the \emph{Springer block} of the category with respect to the generalized Springer decomposition; this is a consequence of a theorem of Hotta and Kashiwara \cite{HottaKashiwara}. Thus the $G$-variety $\mf g$ is $D$-affine only if there are no non-trivial cuspidal data; this happens only when $G$ is of type $A$ with connected center.
\end{enumerate}

The building blocks of generalized Springer theory are called cuspidal objects. The cuspidal $D$-modules in $\QCoh(\dd_{\mf g},G)$ are very special: they are necessarily supported on the nilpotent cone, as are their Fourier transform (in fact, this characterizes cuspidality). 

Another notable feature of cuspidal $\dd$-modules is that they have finite multiplicity. That is, each irreducible $G$-representation appears with finite multiplicity in the module. Thus one can associate to any such cuspidal $\dd$-module its \emph{character}: a sequence of numbers indexed by irreducible representations, recording the dimension of each multiplicity space. It is interesting to note that, though the cuspidal $\dd$-modules are regular holonomic and thus have a purely topological incarnation as perverse sheaves, this character does not appear to be accessible from a purely topological perspective. 

More generally, if $\ms{M}$ is any irreducible $G$-equivariant $\dd$-module supported on the nilpotent cone then it is also (weakly) equivariant for the scaling $\Cs$-action on $\mf{g}$. The $G \times \Cs$-multiplicities spaces of $\ms{M}$ will be finite dimensional and thus $\ms{M}$ has a well-defined $G \times \Cs$-character. This character has been computed for $G = SL_n$ in \cite{MinimalSupport}; see also \cite{SomeComb}. Characters can similarly be defined for equivariant $\dd$-modules on certain classes of $G$-representations, as done in \cite{RaicuSymmetric,RaicuVeronses,binarycubicDmod}. 

\subsection{Further questions}


It is conjectured that if $X$ is a smooth projective variety over $\C$ which is $\dd$-affine, then $X \cong G / P$ for some reductive group $G$ and parabolic subgroup $P$. Based on the results of this article, it is natural to ask:
\begin{enumerate}
	\item[-] For which smooth quasi-projective varieties over $\C$ does the category $\Coh(\dd_X,G)$ admit a projective generator?  
\end{enumerate}

In general, the modules $\mc P_X(V)$ appear hard to compute. For a given affine $G$-variety $X$ and representation $V$, it would be desirable to have a criterion to check whether $\mc P_X(V)$
\begin{enumerate}
	\item[-] is non-zero;
	\item[-] is indecomposable;
	\item[-] is holonomic, or has a holonomic direct summand or submodule.
\end{enumerate}

\subsection{Acknowledgements}

We would like to thank Travis Schedler for useful comments on an earlier draft of this paper. We would also like to thank Toby Stafford for many fruitful discussions.  The first author would like to thank Kevin Coulembier for a useful discussion. 

\section{Projective generators in the abelian category}\label{sec:projeobjectinequivactD}

Let $G$ be a reductive algebraic group over $\C$ and $X$ a smooth affine variety over $\C$ with a regular action of $G$ i.e. a \textit{$G$-variety}. Let $\sigma \colon G \times X \rightarrow X$ be the action map, $p_2 \colon G \times X \rightarrow X$ projection onto the second factor and $s \colon X \hookrightarrow G \times X$ the embedding $s(x) = (e,x)$. 

\begin{defn}\label{defn:equivDmod}
	A $G$-equivariant $\dd$-module on $X$ is a pair $(\ms{M},\theta)$, where $\ms{M}$ is a quasi-coherent $\dd_X$-module and $\varphi \colon p_2^* \ms{M} \stackrel{\sim}{\longrightarrow} \sigma^* \ms{M}$ is an isomorphism of $\dd_{G\times X}$-module satisfying the cocycle condition of \cite[Definition 11.5.2]{HTT} and the rigidity condition $s^* \varphi = \mathrm{Id}_{\ms{M}}$. 
\end{defn}

The category of all $G$-equivariant $\dd$-modules on $X$ is denoted $\QCoh(\dd_X,G)$ and morphisms in $\QCoh(\dd_X,G)$ will be denoted $\Hom_{(\dd_X,G)}(L,M)$.   

\begin{defn}\label{defn:projVDmod}
	For each finite dimensional $G$-module $V$, we define 
	$$
	\mc{P}_X (V) = \dd_X \o V / \dd_X \{ \nu(x) \o v - 1 \o \Phi(x)(v) \ | \ v \in V \},
	$$
	where $\nu \colon \mf{g} \rightarrow \dd_X$ is the comoment map and $\Phi : \mf{g} \rightarrow \End_{\C}(V)$ is the action.
\end{defn}

\begin{lem}
	Assume that $X$ is a smooth affine $G$-variety.
	\begin{enumerate}
		\item[(i)] $\mc{P}_X(V)$ has a canonical $G$-equivariant structure and is projective in $\QCoh(\dd_X,G)$.
		\item[(ii)] The category $\QCoh(\dd_X,G)$ has enough projective. 
	\end{enumerate}
\end{lem}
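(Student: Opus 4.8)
The plan is to establish (i) first, then bootstrap it to (ii). For (i), I would exhibit the $G$-equivariant structure on $\mc P_X(V)$ explicitly and then identify $\Hom_{(\dd_X,G)}(\mc P_X(V), -)$ with an exact functor landing in vector spaces, so that projectivity becomes automatic. The equivariant structure: since $\dd_X$ itself carries a canonical $G$-equivariant structure (the action map $\sigma$, the projection $p_2$ and the embedding $s$ of the preamble are exactly the data needed, and the diagonal $G$-action on $\dd_X$ is compatible with the ring structure and with the comoment map $\nu$), and $V$ is a $G$-module, the tensor product $\dd_X \o V$ is $G$-equivariant; the crucial point is that the relations $\nu(\xi)\o v - 1 \o \Phi(\xi)(v)$ span a $G$-stable (indeed $\dd_X$-submodule and $G$-submodule) subspace, because $G$ acts on $\mf g$ by the adjoint representation, $\nu$ is $G$-equivariant, and $\Phi$ is a map of $G$-modules in the appropriate sense. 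Hence the quotient inherits a $G$-equivariant $\dd_X$-module structure, and one checks the cocycle and rigidity conditions of Definition~\ref{defn:equivDmod} are inherited from those on $\dd_X \o V$.

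For projectivity, I would verify the representability claim already asserted in the introduction: for $M \in \QCoh(\dd_X,G)$,
\[
\Hom_{(\dd_X,G)}(\mc P_X(V), M) \;\cong\; \Hom_G(V, M),
\]
the space of $G$-module maps, which is the $V$-isotypic multiplicity space $\Hom_G(V,M)$ of $M$ viewed as a $G$-representation via its equivariant structure. The isomorphism sends $\phi$ to $v \mapsto \phi(1 \o v)$; one must check that a $\dd_X$-linear map out of the quotient $\mc P_X(V)$ is the same datum as a linear map $V \to M$ whose image consists of elements $m$ satisfying $\nu(\xi) m = $ (the infinitesimal action of $\xi$ on $m$), and that $G$-equivariance of $\phi$ translates precisely into $G$-equivariance of the induced map $V \to M$ — this last point uses the rigidity condition to pin down the equivariant structure. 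Since $M \mapsto \Hom_G(V,M)$ is exact (as $G$ is reductive, so taking isotypic components of the underlying $G$-representation is exact, and the forgetful functor $\QCoh(\dd_X,G) \to \Rep(G)$ — or rather to the category of $G$-representations — is exact), the functor $\Hom_{(\dd_X,G)}(\mc P_X(V), -)$ is exact, so $\mc P_X(V)$ is projective.

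For (ii), given $M \in \QCoh(\dd_X,G)$ I would produce a surjection onto $M$ from a (possibly infinite) direct sum of objects $\mc P_X(V_i)$. Decompose the underlying $G$-representation of $M$ into isotypic pieces; for each irreducible $V$ appearing in $M$, the counit of the adjunction above gives a map $\mc P_X(V) \o \Hom_G(V,M) \to M$ (tensoring over $\C$ with the multiplicity space), and the sum of these over all irreducibles $V$ is surjective — indeed it is already surjective on the level of underlying $\dd_X$-modules, since every element of $M$ lies in some finite-dimensional $G$-subrepresentation (as $M$ is a union of such, $G$ being reductive acting algebraically) and hence in the image. Thus $\QCoh(\dd_X,G)$ has enough projectives.

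The main obstacle I anticipate is the careful verification in (i) that the set-theoretic relations defining $\mc P_X(V)$ assemble into a genuine $G$-equivariant $\dd_{G\times X}$-submodule of $p_2^*(\dd_X \o V)$ compatibly under $\varphi$ with its image in $\sigma^*(\dd_X \o V)$, i.e. that the equivariant structure descends to the quotient; and relatedly, checking that the identification of $\Hom$ with a multiplicity space is compatible with $G$-equivariance rather than merely with $\dd_X$-linearity — this is where the rigidity condition $s^*\varphi = \mathrm{Id}$ does real work, and it is easy to be sloppy about it. The rest is routine once the equivariant bookkeeping is set up correctly.
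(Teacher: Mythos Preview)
Your proposal is correct and follows essentially the same approach as the paper: both proofs rest on the single identification $\Hom_{(\dd_X,G)}(\mc P_X(V), L) \cong \Hom_G(V,\Gamma(X,L))$, from which projectivity (via exactness of taking $V$-isotypic components when $G$ is reductive) and the existence of enough projectives (via the counit surjection) follow immediately. The paper states only this formula and declares that both parts follow; you have spelled out the details it omits, including the descent of the equivariant structure to the quotient and the construction of the surjection in (ii), but the underlying argument is identical.
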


\begin{proof}
	Both statements follow from the fact that 
	$$
	\Hom_{(\dd_X,G)}(\mc{P}_X(V), L) = \Hom_G(V,\Gamma(X,L))
	$$
	for any $L$ in $\QCoh(\dd_X,G)$.
\end{proof}

Recall that a subset $Z$ of $X$ is \textit{$G$-saturated} if it is the preimage of a subset of $X \git G$. If $Z$ is a closed $G$-saturated subset of $X$ then we say that $V$ is a projective generator relative to $Z$ if 
$$
\Hom_{(\dd_X,G)}(\mc{P}_X(V), L) = \Hom_G(V,\Gamma(X,L)) \neq 0
$$
for all $L \in \QCoh(\dd_X,G)$ supported on $Z$. It suffices to consider $L \in \Coh(\dd_X,G)$; in fact we may assume that $L$ is irreducible. 

\section{Compact generators in the derived category}\label{sec:compactgenerderived0} 

\subsection{Background: the equivariant derived category}
Given an affine algebraic group $G$ acting on a smooth quasi-projective algebraic variety $X$, we denote by $\bD(X)^G$ the (unbounded) equivariant derived category of $\dd$-modules (or equivalently, the derived category of $\dd$-modules on the quotient stack $[X/G]$).  This is a triangulated category with a $t$-structure whose heart is $\QCoh(\dd_X,G)$. 

There are various approaches to defining this category in the literature, see \cite{bernstein_equivariant_1994},\cite{bernstein_localization_1995}, \cite{beilinson_quantization_????}, Chapter 7, or \cite{gaitsgory_crystals_2011}. For the purposes of this paper, we will only need some formal properties, namely:
\begin{itemize}
	\item Given a $G$-equivariant morphism $f:X\to Y$, we have a pair of functors 
	\[
	f_\ast:\bD(X)^G \leftrightarrows \bD(Y)^G: f^!
	\]
	If $f$ is an open embedding then $f^! = f^\ast$ is left adjoint to $f_\ast$ and if $f$ is a closed embedding then $f^!$ is right adjoint to $f_\ast = f_!$.
	\item If $i:Z \hookrightarrow X$ is a $G$-equivariant closed embedding and $j:U \to X$ the complementary open embedding, for each object $\ms{M} \in \bD(X)^G$, there is an exact triangle:
	\[
	i_\ast i^! \ms{M} \to \ms{M} \to j_\ast j^! \ms{M}
	\] 
	\item There are adjoint induction and restriction functors:\footnote{In fact, there are two such pairs of functors, corresponding to the realization of $\bD(X)^G$ as either left or right $\dd$-modules.} 
	\[
	\ind: \QC(X)^G \leftrightarrows \bD(X)^G: \res
	\]
	($\ind$ is left adjoint). Here $\QC(X)^G$ denotes the derived category of $G$-equivariant quasi-coherent sheaves on $X$. 
\end{itemize}

\subsection{Compact generators}\label{sec:compactgenerderived}

Recall that an object $d$ of a triangulated category $\bD$ is called compact if $\Hom_{\bD}(d,-)$ preserves direct sums. A collection of objects $d_i, i \in I$ is said to generate $\bD$ if for any non-zero object $e$ of $\bD$, we have
\[
\Hom_\bD(d_i,e[k]) \neq 0 
\]
for some $i\in I$ and some integer $k\in \mathbb Z$ (denoting cohomological shift). 

Now suppose $G$ is a reductive group and $X$ a smooth affine $G$-variety. Given a representation $V\in \Rep(G)$, there is a derived analogue of the modules $\cP_X(V)$ which we denote $\bP_X(V)$, defined by
\[
\bP_X(V) = \ind(V \otimes \mathcal O_X)
\]
As in the abelian category setting, if $X$ is affine and $G$ reductive, the objects $\bP_X(V)$ form a set of compact generators of $\bD(X)^G$ as $V$ ranges over finite dimensional representations of $G$. In fact, this holds even under the assumption that $X$ is quasi-affine (and $G$ need not be reductive) as it is still the case that the objects $\mathcal O_X \otimes V$ form a set of compact generators for $\QC(X)^G$ in that case. 

The following observation is immediate from the defining adjunction properties of $\bP_X(V)$ and $\cP_X(V)$ together with the fact that the object $V\in \Rep(G)$ is projective.
\begin{lem}\label{lem:derived vs abelian}
	For any $V\in \Rep(G)$ and $\ms{M} \in \QCoh(\dd_X,G)$, we have:
	\[
	\Hom_{\bD(X)^G}(\bP_X(V),\ms{M}[k]) \cong \begin{cases} \Hom_{\Rep(G)}(V,\ms{M}), \quad \text{if $k = 0$}\\ 0, \quad \text{otherwise.}\end{cases}
	\]
\end{lem}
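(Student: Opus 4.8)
The plan is to prove Lemma~\ref{lem:derived vs abelian} by reducing the computation of $\Hom_{\bD(X)^G}(\bP_X(V),\ms{M}[k])$ to a computation in $\QC(X)^G$ via the $(\ind,\res)$ adjunction, and then to the heart of a $t$-structure where only degree-zero $\Hom$'s survive. First I would apply the definition $\bP_X(V) = \ind(V\o\mc O_X)$ together with the adjunction $\ind \dashv \res$ recalled in the background subsection, to obtain
\[
\Hom_{\bD(X)^G}(\bP_X(V),\ms{M}[k]) \cong \Hom_{\QC(X)^G}(V\o\mc O_X, \res\,\ms{M}[k]).
\]
Since $\ms{M}$ lies in the heart $\QCoh(\dd_X,G)$ of the $t$-structure on $\bD(X)^G$ and $\res$ is $t$-exact (it is the forgetful functor on underlying quasi-coherent sheaves, which is exact), $\res\,\ms{M}$ is concentrated in cohomological degree $0$ inside $\QC(X)^G$.

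Next I would reduce to an honest $\Hom$ in the abelian category $\QCoh(\mc O_X,G)$ of equivariant quasi-coherent sheaves. The point is that $V\o\mc O_X$ is a projective object of $\QC(X)^G$ in the following strong sense: for any object $\cF$ concentrated in degree $0$, $\Hom_{\QC(X)^G}(V\o\mc O_X,\cF[k])$ vanishes for $k\neq 0$ and equals $\Hom_{\QCoh(\mc O_X,G)}(V\o\mc O_X,\cF)$ for $k=0$. This follows because $X$ is affine, so $\Hom_{\QC(X)^G}(V\o\mc O_X,-) = \Hom_{\Rep(G)}(V, \Gamma(X,-)^{})$, the global sections functor $\Gamma(X,-)$ is exact on $\QCoh(\mc O_X)$, and taking $V$-isotypic components (i.e. $\Hom_{\Rep(G)}(V,-)$) is exact on $\Rep(G)$ since $G$ is reductive and $V$ finite dimensional. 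Combining, the derived $\Hom$ has no higher cohomology and in degree $0$ equals $\Hom_G(V,\Gamma(X,\ms{M}))$. Finally I would identify $\Gamma(X,\ms{M})$ with the underlying $G$-representation of $\ms{M}$, so $\Hom_G(V,\Gamma(X,\ms{M})) = \Hom_{\Rep(G)}(V,\ms{M})$, which is the claimed answer.

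I do not expect any real obstacle here: the statement is precisely the assertion that $\bP_X(V)$ corepresents the degree-$0$ isotypic-multiplicity functor on the heart, and it follows formally from $(\ind,\res)$-adjunction plus exactness/projectivity facts that are all consequences of the hypotheses ($X$ affine, $G$ reductive, $V$ finite dimensional). The one point requiring a little care is making sure that all the relevant functors ($\res$, $\Gamma(X,-)$, $\Hom_{\Rep(G)}(V,-)$) are genuinely exact and not merely left/right exact, so that the derived $\Hom$ in $\bD(X)^G$ really does collapse to a single abelian $\Hom$ with no $\Ext^{>0}$ or $\Ext^{<0}$ contributions; but each of these exactness statements is standard given the hypotheses, as indicated by the remark in the excerpt that the result is ``immediate from the defining adjunction properties.''
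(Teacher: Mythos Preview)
Your proposal is correct and follows exactly the route the paper indicates: the paper does not give a written-out proof but simply states that the lemma is immediate from the $(\ind,\res)$ adjunction together with the projectivity of $V$ in $\Rep(G)$, and you have faithfully unpacked precisely those two ingredients (plus the affineness of $X$ for exactness of $\Gamma$). There is nothing to add.
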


Thus we obtain the following result relating generators in the abelian and derived categories:
\begin{prop}
	Suppose $V \in \Rep(G)$ is a finite dimensional representation with the property that $\bP_X(V)$ is a (compact) generator of $\bD(X)^G$. Then $\cP_X(V)$ is a (compact, projective) generator of $\QCoh(\dd_X,G)$.
\end{prop}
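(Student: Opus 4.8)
The plan is to deduce the statement about the abelian category directly from the statement about the derived category, using the compatibility recorded in Lemma \ref{lem:derived vs abelian} together with the fact that $\bD(X)^G$ carries a $t$-structure whose heart is $\QCoh(\dd_X,G)$. First I would note that by the preceding discussion, $\cP_X(V)$ is always compact and projective in $\QCoh(\dd_X,G)$ (this was established in the abelian-category lemma); so the only thing to prove is that if $\bP_X(V)$ generates $\bD(X)^G$ then $\cP_X(V)$ generates $\QCoh(\dd_X,G)$, i.e. that $\Hom_{(\dd_X,G)}(\cP_X(V),\ms{M}) \neq 0$ for every non-zero $\ms{M} \in \QCoh(\dd_X,G)$.

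So let $\ms{M}$ be a non-zero object of $\QCoh(\dd_X,G)$, and regard it as an object of $\bD(X)^G$ via the embedding of the heart. Since $\bP_X(V)$ generates $\bD(X)^G$, there is some integer $k$ with $\Hom_{\bD(X)^G}(\bP_X(V),\ms{M}[k]) \neq 0$. Now I invoke Lemma \ref{lem:derived vs abelian}: the group $\Hom_{\bD(X)^G}(\bP_X(V),\ms{M}[k])$ vanishes unless $k=0$, in which case it is identified with $\Hom_{\Rep(G)}(V,\ms{M})$. Hence the non-vanishing forces $k=0$ and gives $\Hom_{\Rep(G)}(V,\ms{M}) \neq 0$. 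Finally, by the adjunction characterizing $\cP_X(V)$ in the abelian category (namely $\Hom_{(\dd_X,G)}(\cP_X(V),L) = \Hom_G(V,\Gamma(X,L))$, and $\Gamma(X,\ms{M}) = \ms{M}$ as a $G$-representation since $X$ is affine), we get $\Hom_{(\dd_X,G)}(\cP_X(V),\ms{M}) \neq 0$. This shows $\cP_X(V)$ generates $\QCoh(\dd_X,G)$, and since it is already known to be compact and projective, the proposition follows. The parenthetical assertions about compactness and projectivity require no new argument since they are intrinsic to $\cP_X(V)$ and were dealt with earlier.

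The argument is essentially formal once Lemma \ref{lem:derived vs abelian} is in hand, so there is no serious obstacle; the only point requiring a little care is making sure that the generation hypothesis in the derived category is applied to $\ms{M}$ placed in cohomological degree $0$ and that the key input — the concentration of $\Hom$ from $\bP_X(V)$ into a single degree against objects of the heart — is exactly what Lemma \ref{lem:derived vs abelian} supplies. One should also confirm that "generates" in the derived sense (detecting non-vanishing of $\Hom$ up to shift) matches "generates" in the abelian sense (every non-zero object admitting a non-zero map from the projective), which is precisely what the degree-$0$ concentration bridges.
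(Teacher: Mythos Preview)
Your proposal is correct and follows exactly the approach the paper intends: the proposition is stated immediately after Lemma \ref{lem:derived vs abelian} as a direct consequence, and your argument spells out precisely that deduction --- applying the derived generation hypothesis to an object of the heart and using the degree-$0$ concentration from the lemma to obtain abelian generation.
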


It follows that, to prove Theorem \ref{thm:mainprofconj}, it suffices to prove the derived analogue, Theorem \ref{thm:derived}

\subsection{Proof of Theorem \ref{thm:derived}}
The basic idea behind the proof is to stratify the variety $X$ in to locally closed subvarieties $X_i$, each of which admits a compact generator for the equivariant derived category, then use this collection to build a compact generator for $X$.

\begin{prop}\label{prop:stratification}
	Let $X$ be a smooth affine $G$-variety. There is a sequence of $G$-stable closed subvarieties:
	\[
	Y_0 \subseteq \ldots \subseteq Y_n = X
	\]
	where the successive complements $X_i = Y_i - Y_{i-1}$ are quasi-affine, smooth subvarieties with the following property:
	\begin{itemize}
		\item there is a finite dimensional representation $W_i \in \Rep(G)$ such that $\bP_{X_i}(W_i) \in \bD(X_i)^G$ is a compact generator.
	\end{itemize}
\end{prop}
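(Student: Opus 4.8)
The plan is to take for the filtration the \emph{orbit type stratification} of $X$. Since $G$ is reductive and $X$ is smooth and affine, there are only finitely many conjugacy classes $(H)$ of stabilizer subgroups, and for each of them the locus $X_{(H)} = \{x \in X : \Stab_G(x)\ \text{is conjugate to}\ H\}$ is a $G$-stable, locally closed, smooth subvariety of $X$; smoothness is the standard consequence of Luna's \'etale slice theorem, and quasi-affineness is automatic for a locally closed subset of the affine variety $X$. Listing the finitely many strata in order of non-decreasing dimension makes $Y_j := \bigcup_{i\le j} X_{(H_i)}$ closed for every $j$ (the boundary of a stratum is a union of strata of strictly smaller dimension), so one obtains the chain $Y_0 \subseteq \cdots \subseteq Y_n = X$ with $X_i = Y_i - Y_{i-1} = X_{(H_i)}$. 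Everything then reduces to the bulleted claim for a single stratum; I would fix $X_i = X_{(H)}$ with $H = H_i$ and present it as $X_i \cong G \times_{N_G(H)} X_i^H$, where $X_i^H$ is the (smooth, quasi-affine) locus of points with stabilizer exactly $H$, on which $H$ acts trivially and $N_G(H)/H$ acts freely.

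Next I would record the general mechanism relating the objects $\bP_{X_i}(V)$ to generation. The forgetful functor $\res\colon \bD(X_i)^G \to \QC(X_i)^G$ is continuous and conservative, with left adjoint $\ind$; hence each $\bP_{X_i}(V) = \ind(V\otimes \mathcal{O}_{X_i})$ is compact, and, since the objects $V \otimes \mathcal{O}_{X_i}$ compactly generate $\QC(X_i)^G$ while $\res$ is conservative, the family $\{\bP_{X_i}(V) : V \in \Rep(G)\}$ generates $\bD(X_i)^G$. So the only thing left to prove is that \emph{finitely many} of the $\bP_{X_i}(V)$ already generate, for then their direct sum has the form $\bP_{X_i}(W_i)$ with $W_i$ the corresponding finite sum of the $V$'s. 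By the usual compactness argument — a compact object of the localizing subcategory generated by a set of compact objects already lies in the thick subcategory generated by a finite subcollection — this reduction follows the moment one knows that $\bD(X_i)^G = \bD([X_i/G])$ admits \emph{some} compact generator at all.

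The heart of the matter is therefore the existence of a single compact generator for $\bD([X_i/G])$ — and here it is essential that we are working with $\dd$-modules rather than quasi-coherent sheaves: $\QC([X_i/G])$ has no compact generator once $H^\circ \ne 1$, which is precisely the obstruction flagged for $\QC(\mathcal{O}_X, G)$ in the introduction. The reason $\bD$ behaves differently is that along a fixed-point locus a connected stabilizer acts trivially on $\dd$-modules: on $X_i^H$ the comoment map $\Lie(H^\circ) \to \dd_{X_i^H}$ vanishes, so strong equivariance forces the infinitesimal $\Lie(H^\circ)$-action on any equivariant $\dd_{X_i^H}$-module to vanish, and being connected $H^\circ$ then acts trivially. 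Consequently the category descends along the rigidification $[X_i^H/N_G(H)] \to \mathcal{G}$ that kills $H^\circ$ from the inertia, where $\mathcal{G}$ is a gerbe over the quasi-affine scheme $Z := X_i^H/(N_G(H)/H^\circ)$ banded by the \emph{finite} group $H/H^\circ$ — a finite-type Deligne--Mumford stack with quasi-affine coarse space — whose $D$-module category visibly has a compact generator (take a finite \'etale cover by a quasi-affine scheme and descend). Since the fibres of the rigidification map are $BH^\circ$ and $\bD(BH^\circ)$ is itself a small category, $\bD([X_i/G])$ acquires a compact generator; alternatively one may simply quote Theorem~\ref{thm:dg compact generator}, as $[X_i/G]$ is of finite type with affine diagonal.

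The step I expect to be the real obstacle is exactly this last one — articulating why $\bD([X_i/G])$ is "finitely generated" even when the stabilizers are positive-dimensional (and possibly non-reductive). The mechanism above is the explanation: the connected part of the stabilizer contributes only the (coherent, and finite-dimensional on the unipotent directions) cohomology of its classifying stack in place of its full representation category, so the whole category is governed by the finite component group over a quasi-affine base. Turning this into a clean statement requires some care with the rigidification at the level of the unbounded derived category, and with $\bD(BH^\circ)$ for non-reductive $H^\circ$ (where one uses that the unipotent part of $H^\circ$ is de Rham-acyclic). The remaining ingredients — smoothness and quasi-affineness of the orbit type strata, and the identification $X_i \cong G\times_{N_G(H)}X_i^H$ — are routine slice-theorem geometry.
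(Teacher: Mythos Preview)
Your stratification is genuinely different from the paper's. The paper does not use the orbit type stratification; it combines Luna's stratification of $X\git G$ with Hesselink's stratification of each nullcone, producing strata $X_i$ that admit an \'etale equivariant surjection $G\times_{P_i} U_i\to X_i$ with $P_i$ parabolic in a reductive subgroup $H_i\subset G$ and $U_i$ smooth quasi-affine carrying an action of the \emph{reductive} Levi $L_i=P_i/U(P_i)$. The payoff is that (after the \'etale cover and killing the unipotent radical, which is invisible to $\dd$-modules) one lands in $[U_i/L_i]$ with $L_i$ reductive, where a generator of the form $\bP_{U_i}(W)$ is available without further appeal to general machinery. Your orbit type strata are simpler to write down, but their stabilizers may be non-reductive, so the existence of a compact generator on each piece is less explicit; your fallback to Theorem~\ref{thm:dg compact generator} is valid (the paper also notes that the proposition is a special case of Proposition~\ref{prop:strat of stacks}), but it offloads precisely the content that the proposition is meant to make concrete.

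There is, however, a real gap in your direct argument. You are right that the comoment map $\Lie(H^\circ)\to\dd_{X_i^H}$ vanishes, and hence the \emph{abelian} category of strongly $H^\circ$-equivariant $\dd_{X_i^H}$-modules coincides with $\dd_{X_i^H}$-modules carrying the trivial $H^\circ$-action. But this fails at the derived level: $\bD(BH^\circ)$ is equivalent to $\bD(\mathrm{pt})$ only when $H^\circ$ is unipotent --- already for $H^\circ=\mathbb{C}^\times$ one has $\bD(B\mathbb{C}^\times)\simeq C_\ast(\mathbb{C}^\times)\mmod$, an exterior algebra on a degree~$-1$ generator, not $\mathrm{Vect}$. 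So the category does \emph{not} descend along the rigidification as you assert. Your subsequent remark that $\bD(BH^\circ)$ is ``small'' is closer to the mark --- it does have a single compact generator --- but promoting this to a compact generator for the total space is not automatic and is essentially the substance of Theorem~\ref{thm:dg compact generator}. A smaller point: Luna's slice theorem applies only at closed orbits, so smoothness and the frontier condition for orbit type strata with non-reductive $H$ are true but need more than ``routine slice-theorem geometry''; one must first pass to a nearby closed orbit and analyze the slice representation there.
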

Proposition \ref{prop:stratification} may be proved in a similar way to Theorem \ref{thm:mainprojgenslice} using Luna slices \cite{Luna} and Hesselink's stratification of the nullcone \cite[Theorem 4.7]{HesselinkDesing}. These results combined imply that one can choose a stratification of $X$ as above such that there exists an \'etale equivariant surjection $G \times_{P_i} U_i \to X_i$, where $P_i$ is a parabolic of a reductive subgroup $H_i$ of $G$ and $U_i$ is a smooth quasi-affine variety acted upon by the reductive quotient $P_i / U(P_i)$. It is also a special case of Proposition \ref{prop:strat of stacks} below, using Lemma 10.3.9 in \cite{drinfeld_finiteness_2013}.

Let $\alpha_i \colon Y_i \hookrightarrow X$ and $\beta_i \colon X_i \hookrightarrow X$ be the (locally) closed embeddings. We can now find a suitable representation $V$ as follows. Let $Q_i \in \bD(X)^G$ be a compact object extending $\bP_{X_i}(W_i)$, i.e. $\beta_i^!(Q_i) = \bP_{X_i}(W_i)$ (for example, one can take the induced $D$-module associated to the coherent sheaf $W_i \otimes \mathcal O_{Y_i}$). As with any finite collection of compact objects, we can find a single finite dimensional representation $V$ such that $Q_i \in \langle \bP_X(V) \rangle$ for each $i=0, \ldots , n$. In other words, each $Q_i$ can be expressed as a direct summand of a complex of copies of $\bP_X(V)$.

Now suppose that $\ms{M} \in \bD(X)^G$ is such that $R\Hom_{\bD(X)^G}(\bP_X(V),\ms{M}) \simeq 0$. By construction, we must have $R\Hom_{\bD(X)}(Q_i,\ms{M}) \simeq 0$ for all $i= 0, \ldots , n$. We will show by induction that $\alpha_{i\ast}\alpha_i^!(\ms{M}) \simeq 0$ for each $i= 0, \ldots , n$. 

To begin with, note that $\alpha_0 = \beta_0$ is a closed inclusion of a smooth subvariety $Y_0$, and $Q_0 = (\alpha_0)_\ast \bP_{Y_0}(W_0)$. There is an exact triangle:
\[
(\alpha_0)_\ast (\alpha_0)^! \ms{M} \to \ms{M} \to (\mathring{\alpha}_0)_\ast(\mathring{\alpha}_0)^! \ms{M}
\]
Applying the functor $R\Hom(Q_0, -)$ to this triangle, and noting that the middle term (by assumption) and the right hand term (as $Q_0$ is supported on $Y_0$) must vanish, we see that 
\[
0 \simeq R\Hom(Q_0,(\alpha_0)_\ast (\alpha_0)^! \ms{M}) \simeq R\Hom(\bP_{X_0}(W_0), (\alpha_0)^! \ms{M}),
\]
and thus $(\alpha_0)^!\ms{M} \simeq 0$ as desired. 

Now suppose we want to show that $(\alpha_i)^!\ms{M} \simeq 0$ for $i>0$. First we have the exact triangle
\[
(\alpha_{i})_\ast(\alpha_i)^! \ms{M} \to \ms{M} \to (\mathring{\alpha}_i)_\ast(\mathring{\alpha}_i)^! \ms{M}
\]
Applying $R\Hom_{\bD(X)}(Q_i,-)$ as before, we see that the middle and right hand terms vanish. Thus
\[
R\Hom_{\bD(X)}(Q_i,(\alpha_{i})_\ast(\alpha_i)^! \ms{M}) \simeq 0
\]
On the other hand, there is another exact triangle:
\[
(\alpha_{i-1})_\ast (\alpha_{i-1})^! \ms{M} \to (\alpha_i)_\ast(\alpha_i^!)\ms{M} \to (\beta_i)_\ast (\beta_i)^! \ms{M}
\]
By the inductive hypothesis, the left hand term vanishes, and thus the right hand arrow is an isomorphism. Thus we have that
\begin{align*}
0 &\simeq  R\Hom_{\bD(X)^G}(Q_i,(\alpha_i)_\ast (\alpha_i)^! \ms{M}) \\
&\simeq R\Hom_{\bD(X)^G}(Q_i,(\beta_i)_\ast (\beta_i)^! \ms{M})\\
&\simeq R\Hom_{\bD(X)^G}(\bP_{X_i}(W_i), (\beta_i)^! \ms{M})
\end{align*}
where the last isomorphism uses that everything is supported on $Y_i$ and thus $\beta_i$ behaves as an open embedding. By the assumption on $W_i$, we see that $\beta_i^!\ms{M} \simeq 0$, and thus $\alpha_i^! \ms{M} \simeq 0$ as required.

\section{Reduction to the nullcone}

In this section we explain how one can reduce Question~\ref{thequestion} to a question about projective modules that are generators relative to $\dd$-modules supported on the nullcone of a $G$-representation. 

\subsection{Luna's stratification}\label{sec:LunastrataProjgen}

If $x \in X$ such that the orbit $G \cdot x$ is closed, then the stabilizer $H = \Stab_G(x)$ is reductive by Matsushima's Theorem. Let $N_H$ be a $H$-stable complement to $\mf{g} \cdot x$ in $T_x X$. Following Luna, we call the pair $(H,N)$ a \textit{model} of $X$ at $x$. Two models $(H,N_H)$ and $(K,N_K)$ are equivalent if there exists $g \in G$ such that $g H g^{-1} = K$ and ${}^g N_H \simeq N_K$ as $K$-modules. The set of all equivalence classes of models is denoted $C_G(X)$. For $(H,N_H) \in C_G(X)$, let $(X \git G)_{(H,N_H)}$ denote the set of all points $y$ such that if $x \in \pi^{-1}(y)$ belongs to the unique closed $G$-orbit of the fibre then the model of $X$ at $x$ is in the equivalence class of $(H,N_H)$. By Corollary 3 and Corollary 4 of \cite{Luna}, 
$$
X \git G = \bigsqcup_{(H,N_H) \in C_G(X)} (X \git G)_{(H,N_H)}
$$
is a finite stratification, with smooth locally closed strata $(X \git G)_{(H,N_H)}$. Recall that Let $N_H'$ denote the $H$-complement to $N_H^H$ in $N_H$.

If $X$ is a $G$-representation then it has been shown in \cite[Lemma 5.5]{SchwarzLiftingHomo} that each stratum is connected and $N_H$ is, up to isomorphism, uniquely determined by $X$ and $H$. Thus, $C_G(X)$ is identified with a subset of the set of conjugacy classes of reductive subgroups of $G$.  

When $X$ is a $G$-representation, the \textit{nullcone} is $\mc{N}(X) := \pi^{-1}(0)$. 

\subsection{The proof of Theorem \ref{thm:mainprojgenslice}}\label{sec:proofofmainprojgenslice}

The goal of this section is to prove Theorem~\ref{thm:mainprojgenslice}. We will actually show a more precise statement holds in the proof. Let $\varphi \colon X \rightarrow Y$ be an equivariant morphism between affine $G$-varieties. Recall that $\varphi$ is \textit{excellent} if 
\begin{enumerate}
	\item[(a)] $\varphi$ is \'etale. 
	\item[(b)] The induced morphism $\varphi\git G \colon X \git G \rightarrow Y \git G$ is \'etale. 
	\item[(c)] The induced map $X \rightarrow Y \times_{Y \git G} X \git G$ is a $G$-equivariant isomorphism.
\end{enumerate}

\begin{lem}\label{lem:projgenexcellentmap}
Assume that $\varphi \colon X \rightarrow Y$ is an excellent morphism between smooth affine $G$-varieties, $Z \subset Y$ a $G$-saturated closed subset and $Z' = \varphi^{-1}(Z)$. Then the $G$-module $V$ is a projective generator relative to $Z$ if and only if it is a projective generator relative to $Z'$. 
\end{lem}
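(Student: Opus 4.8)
The plan is to exploit the fact that an excellent morphism $\varphi \colon X \to Y$ induces an equivalence of categories between $G$-equivariant $\dd$-modules on the two sides after passing to appropriate localizations, or more directly, that pullback along $\varphi$ is compatible with the functor of global sections and invariants. First I would recall the key consequence of excellence: condition (c) says that $X \cong Y \times_{Y\git G} (X\git G)$, so $\varphi$ behaves like a base change along the étale map $X\git G \to Y\git G$. In particular, for any $G$-equivariant $\dd$-module $M$ on $Y$ supported on $Z$, the pullback $\varphi^\ast M$ (which makes sense since $\varphi$ is étale, so $\varphi^! = \varphi^\ast$ up to shift) is a $G$-equivariant $\dd$-module on $X$ supported on $Z' = \varphi^{-1}(Z)$, and conversely every irreducible $G$-equivariant $\dd$-module on $X$ supported on $Z'$ arises (up to the stratum-by-stratum analysis) from one on $Y$. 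The crucial point is the relation
\[
\Gamma(X, \varphi^\ast M)^G \cong \Gamma(Y, M)^G,
\]
which should follow because taking $G$-invariants of global sections is computed on the GIT quotient, and $\varphi\git G$ is étale with the fiber-product property (c) ensuring no sections are lost or gained. More precisely, $\Gamma(X,\varphi^\ast M)^G = \Gamma(X\git G, (\varphi^\ast M)/\!\!/G)$ and $(\varphi^\ast M)/\!\!/G$ is the pullback of $M/\!\!/G$ along the étale map $X\git G \to Y\git G$; since étale pullback is faithfully... — actually one only needs that it detects nonvanishing on the relevant locus, which holds because $Z\git G$ is in the image (being the image of $Z'\git G$ under an étale, hence open, map, combined with $Z = \pi^{-1}(Z\git G)$).

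The key steps, in order: (1) Show $\varphi^\ast$ (or $\varphi^!$) sends $\QCoh(\dd_Y,G)$-modules supported on $Z$ to $\QCoh(\dd_X,G)$-modules supported on $Z'$, and is exact and conservative on this subcategory; deduce it sends irreducibles to (finite direct sums of, or single) irreducibles, and that every irreducible supported on $Z'$ is a summand of some $\varphi^\ast M$. (2) Establish the isomorphism $\Hom_G(V,\Gamma(X,\varphi^\ast M)) \cong \Hom_G(V,\Gamma(Y,M))$ naturally in $M$, using condition (c) to identify global sections after the invariant-theoretic quotient and condition (b) to control the étale descent. (3) Combine: $V$ is a projective generator relative to $Z$ iff $\Hom_G(V,\Gamma(Y,M)) \neq 0$ for all nonzero $M$ supported on $Z$; by (1) and (2) this is equivalent to $\Hom_G(V,\Gamma(X,N))\neq 0$ for all nonzero $N$ supported on $Z'$, since the $N$'s of the form $\varphi^\ast M$ suffice (every irreducible supported on $Z'$ being a summand of such) — note that for the "only if" direction one passes from an arbitrary $N$ on $X$ to $M$ on $Y$ via the stratum analysis or via the known surjectivity of orbits under the étale quotient map, and for "if" one simply restricts.

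The main obstacle I expect is step (2): carefully making sense of "global sections commute with excellent pullback after taking $G$-invariants." One has to be careful that $\varphi$ is not affine as a map of stacks in an obvious way, and that $M$ is only quasi-coherent over $\dd_Y$, not coherent, so naive finiteness arguments don't apply directly. The cleanest route is probably to reduce to the statement that the functor of quantum Hamiltonian reduction (i.e. $\Gamma(-)^G = \Hom_{(\dd,G)}(\mathcal P, -)$) intertwines with $\varphi^\ast$ up to the equivalence $\LMod{R_Y} \to \LMod{R_X}$ induced by the étale algebra map $R_Y \to R_X$, and then invoke that an étale (hence flat) base change on the quotient is faithful on the subcategory of modules set-theoretically supported over the closed set $Z\git G$ which lies in the image. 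Alternatively, one can argue entirely at the level of the categories $\QCoh(\dd_X,G)$ by showing $\varphi^\ast$ restricted to the subcategories supported on $Z$, $Z'$ is an equivalence — this may actually be the slickest formulation, since then preservation of projective generators is automatic from the compatibility of $\varphi^\ast$ with the representing objects $\mathcal P_Y(V) \mapsto \mathcal P_X(V)$, which follows from the adjunction $\Hom_{(\dd_X,G)}(\mathcal P_X(V),-) = \Hom_G(V,\Gamma(X,-))$ together with step (2).
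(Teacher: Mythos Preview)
Your plan for the ``if'' direction (projective generator relative to $Z'$ implies relative to $Z$) matches the paper's: pull back $L$ along $\varphi$, and use that $\Gamma(X,\varphi^* L) = \C[X]^G \otimes_{\C[Y]^G} \Gamma(Y,L)$ together with faithful flatness of $\C[X]^G$ over $\C[Y]^G$. Note, however, that your step (2) is misstated: you do not get an isomorphism $\Hom_G(V,\Gamma(X,\varphi^\ast M)) \cong \Hom_G(V,\Gamma(Y,M))$, only an identification of the left-hand side with $\C[X]^G \otimes_{\C[Y]^G} \Hom_G(V,\Gamma(Y,M))$. You seem to catch this yourself (``one only needs that it detects nonvanishing''), and that weaker statement is indeed all that is needed.

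The real gap is in the ``only if'' direction. You propose to show that every irreducible $N$ on $X$ supported on $Z'$ occurs as a summand of some $\varphi^\ast M$, and you gesture at ``stratum analysis'' or ``surjectivity of orbits'' without giving an argument. This descent-type statement is not obvious, and your sketch does not establish it. The paper bypasses this entirely by going in the opposite functorial direction: given $N$ on $X$ supported on $Z'$, take the \emph{pushforward} $\varphi_* N$, which is a $G$-equivariant $\dd_Y$-module supported on $Z$. Since $\Gamma(Y,\varphi_* N) = \Gamma(X,N)$ tautologically, the hypothesis that $V$ is a projective generator relative to $Z$ gives $\Hom_G(V,\Gamma(X,N)) = \Hom_G(V,\Gamma(Y,\varphi_* N)) \neq 0$ immediately. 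No descent, no analysis of how irreducibles on $X$ relate to those on $Y$, is required. Your alternative suggestion that $\varphi^*$ might be an equivalence on the supported subcategories is also unjustified and in general false (the map $\varphi\git G$ need not be surjective onto $Z\git G$, for instance).
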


\begin{proof}
Let $L \in \Coh(\dd_Y,G)$. First, we note that 
$$
\Hom_{(\dd_Y,G)}(\mc{P}_Y(V),L) \neq 0 \quad \Leftrightarrow \quad \Hom_{(\dd_X,G)}(\mc{P}_X(V),\varphi^* L) \neq 0
$$
because $\Gamma(X,\varphi^* L) = \C[X]^G \o_{\C[Y]^G} \Gamma(Y,L)$ and $\C[X]^G$ is faithfully flat over $\C[Y]^G$. 

Assume now that $V$ is a projective generator relative to $Z'$. If $L$ is supported on $Z$ then $\Hom_{(\dd_X,G)}(\mc{P}_X(V),\varphi^* L) \neq 0$ since $\varphi^* L$ is supported on $Z'$. Thus, $V$ is a projective generator relative to $Z$. 

Conversely, assume that $V$ is a projective generator relative to $Z$. Let $M$ be a coherent $G$-equivariant $\dd_X$-module supported on $Z'$. Then $\varphi_! M = \varphi_* M$ is a coherent $G$-equivariant $\dd_Y$-module supported on $Z$ and $\Gamma(Y,\varphi_! L) = \Gamma(X,L)$. Thus, $\Hom_G(V,\Gamma(Y,\varphi_! L)) \neq 0$ implies that $\Hom_G(V,\Gamma(X,L)) \neq 0$ as required. 
\end{proof}

Let $H \subset G$ be a reductive subgroup and $Y$ a smooth affine $H$-variety. Set $X = G \times_H Y$ and write $i \colon Y \hookrightarrow X$ for the closed embedding $i(y) = (1,y)$. As shown in \cite[Proposition 9.1.1]{mirabolicHam}, pullback defines an equivalence $i^* \colon \Coh(\dd_X,G) \rightarrow \Coh(\dd_Y,H)$. The following result is established in the proof of \cite[Lemma 3.6]{BellBoosSemisimple}. 

\begin{lem}\label{lem:istarVrestrict}
The equivalence $i^*$ satisfies $\Hom_G(V,L) = \Hom_H(V |_H, i^* L)$.
\end{lem}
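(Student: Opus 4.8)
The plan is to unwind the equivalence $i^*\colon \Coh(\dd_X,G)\to\Coh(\dd_Y,H)$ from \cite[Proposition 9.1.1]{mirabolicHam} at the level of global sections, and then compare $G$-invariants on $X=G\times_H Y$ with $H$-invariants on $Y$. The key point is the classical identification of sections of an equivariant sheaf on an induced space: for the closed embedding $i\colon Y\hookrightarrow X$, $i(y)=(1,y)$, and $L\in\Coh(\dd_X,G)$, one has a natural isomorphism $\Gamma(X,L)\cong \Ind_H^G \Gamma(Y,i^*L)$ of $G$-modules, where $\Ind_H^G$ denotes the (algebraic, i.e. $\C[G]$-coinduction) induction functor on representations. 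This is just the statement that $G$-equivariant quasi-coherent sheaves on $X=G\times_H Y$ are the same as $H$-equivariant quasi-coherent sheaves on $Y$, applied to the underlying $\mc O$-module, together with the observation that $i^*L$ computes the restriction along $Y\hookrightarrow X$ compatibly with this equivalence (which is exactly what \cite[Proposition 9.1.1]{mirabolicHam} provides, now also tracking the $\dd$-module structure but we only need the $G$/$H$-module structure on sections).

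Granting this, the lemma follows by Frobenius reciprocity: first I would write
\[
\Hom_G(V,\Gamma(X,L)) \;\cong\; \Hom_G\!\big(V,\Ind_H^G\Gamma(Y,i^*L)\big) \;\cong\; \Hom_H\!\big(V|_H,\Gamma(Y,i^*L)\big),
\]
where the second isomorphism is the adjunction $(\Res_H^G,\Ind_H^G)$ for rational representations of the reductive groups in question (here $\Ind_H^G$ is right adjoint to restriction, i.e. coinduction, which is the correct variance since $\Gamma$ of an induced space is a coinduced module). Since $\Hom_H(V|_H, i^*L)$ is by definition the right-hand side of the asserted equality (morphisms of $H$-representations out of $V|_H$ into the $H$-module $i^*L=\Gamma(Y,i^*L)$), this gives exactly $\Hom_G(V,L)=\Hom_H(V|_H,i^*L)$ as claimed, where on both sides $\Hom$ of an object with a representation means $\Hom$ into its space of global sections.

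The main obstacle — really the only subtle point — is pinning down the precise variance in the identification $\Gamma(X,L)\cong\Ind_H^G\Gamma(Y,i^*L)$: one must be sure the functor appearing is the coinduction $\Hom_H(\C[G],-)$ (equivalently, the functor right adjoint to $\Res$), not the induction $\C[G]\otimes_{\C[H]}(-)$, and that $i^*$ on the sheaf side corresponds to the unit/counit of the equivalence $\Coh(\dd_X,G)\simeq\Coh(\dd_Y,H)$ so that $\Gamma(Y,i^*L)$ is literally the $H$-module obtained by descent. For $H$ reductive (which holds here by Matsushima, as the ambient setup guarantees) rational representations are semisimple, so in fact coinduction and induction agree after restricting to finite-dimensional pieces and one need not worry about the distinction when testing against the finite-dimensional $V$; but I would still state the adjunction cleanly. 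Once the identification of sections is in hand, the proof is a one-line application of Frobenius reciprocity, and indeed the excerpt attributes the argument to the proof of \cite[Lemma 3.6]{BellBoosSemisimple}, so I would simply cite that computation for the compatibility of $i^*$ with global sections and supply the reciprocity step.
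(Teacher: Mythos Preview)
Your argument is correct and is exactly the computation the paper has in mind: the paper does not supply its own proof here but simply cites \cite[Lemma 3.6]{BellBoosSemisimple}, whose proof proceeds precisely via the identification $\Gamma(G\times_H Y,L)\cong\Ind_H^G\Gamma(Y,i^*L)$ followed by Frobenius reciprocity. Your discussion of the variance is accurate (algebraic induction $(\C[G]\otimes -)^H$ is right adjoint to restriction, with no reductivity of $H$ needed for the adjunction itself), so the caveat about semisimplicity, while harmless, is not strictly necessary.
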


Consider now the case where $X$ is a $G$-module. Then $X = X^G \oplus X'$ for some (unique) $G$-complement $X'$, and $\mc{N}(X) = \mc{N}(X') \times \{ 0 \}$. Moreover, we are interested in this situation because the subvariety $\mc{N}(X') \times X^G$ will correspond to the lowest stratum of Luna's stratification for $X$.  

\begin{lem}\label{lem:projgenreomveinvariants}
The representation $V$ is a projective generator relative to $\mc{N}(X') \times X^G \subset X$ if and only if it is a projective generator relative to $\mc{N}(X') \subset X'$. 
\end{lem}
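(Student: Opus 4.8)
The plan is to reduce the statement about $\dd$-modules on $X = X^G \oplus X'$ supported on $\mc{N}(X') \times X^G$ to the corresponding statement on $X'$ supported on $\mc{N}(X')$, by factoring out the free $X^G$-direction. Write $X = X' \times X^G$ as $G$-varieties, where $G$ acts trivially on $X^G \cong \A^m$. Then $\dd_X = \dd_{X'} \boxtimes \dd_{\A^m}$ and the $G$-action only involves the first tensor factor. The subvariety $\mc{N}(X') \times X^G$ is exactly $p^{-1}(\mc{N}(X'))$, where $p \colon X \to X'$ is the projection; note that $p$ is $G$-equivariant, affine, and smooth (indeed a trivial vector bundle), so it behaves well for $\dd$-module pushforward.

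First I would establish the key computational fact: for $V \in \Rep(G)$ there is a natural identification
\[
\mc{P}_X(V) \;\cong\; p^* \mc{P}_{X'}(V) \;=\; \mc{P}_{X'}(V) \boxtimes \dd_{\A^m},
\]
which is immediate from Definition \ref{defn:projVDmod} since the comoment map for $X$ factors through $\dd_{X'} \subseteq \dd_X$ and $X^G$ contributes nothing to the relations. Equivalently, on global sections, $\Gamma(X, L)$ as a $G$-module only sees the $X'$-variables. Concretely, the $\C[X^G] = \C[t_1,\dots,t_m]$-module structure commutes with the $G$-action, so $\Hom_G(V, \Gamma(X,L))$ is controlled by the pushforward $p_* L \in \QCoh(\dd_{X'}, G)$ (or $p_+ L$ in the derived sense). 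The mechanism is the same faithful-flatness/base-change idea already used in Lemma \ref{lem:projgenexcellentmap}: $\Hom_G(V,\Gamma(X,L)) \cong \Hom_G(V, \Gamma(X', p_* L))$.

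Next I would run the two implications. For the "only if" direction: given a nonzero $M \in \Coh(\dd_{X'},G)$ supported on $\mc{N}(X')$, form $p^* M \in \Coh(\dd_X, G)$; it is nonzero, coherent (since $p^*$ along a trivial bundle of relative dimension $m$ is exact and conservative), and supported on $\mc{N}(X') \times X^G$. Then $\Hom_G(V,\Gamma(X', M)) \hookrightarrow \Hom_G(V,\Gamma(X, p^* M))$, so a generator relative to $\mc{N}(X') \times X^G$ restricts to a generator relative to $\mc{N}(X')$. For the "if" direction: given nonzero $L \in \Coh(\dd_X,G)$ supported on $\mc{N}(X') \times X^G = p^{-1}(\mc{N}(X'))$, I would show that $p_* L$ (using that $p$ is affine, so $p_* = p_!$ up to shift and is $t$-exact on the relevant pieces, or simply taking a suitable cohomology sheaf) is a nonzero coherent $G$-equivariant $\dd_{X'}$-module supported on $\mc{N}(X')$ — nonzero because $p$ is surjective with affine fibres and $\Gamma(X', p_* L) = \Gamma(X,L) \neq 0$ — and then $\Hom_G(V, \Gamma(X', p_* L)) = \Hom_G(V,\Gamma(X,L))$ gives the conclusion.

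The main obstacle is the "if" direction: one must be careful that pushing forward along the non-proper map $p$ keeps us in the abelian category of coherent equivariant $\dd$-modules and does not annihilate the module. The cleanest fix is to work with the constituents along the $X^G$-direction directly: since $\dd_X = \dd_{X'} \boxtimes \dd_{\A^m}$ and $L$ is supported over $\mc{N}(X') \subseteq X'$ with no constraint in the $X^G$-direction, one can decompose $L$ (as a $\dd_{\A^m}$-module in families) and extract a nonzero coherent $\dd_{X'}$-module summand or subquotient retaining the $G$-structure and the support condition; alternatively, one invokes Kashiwara-type reasoning after noting $\mc{N}(X') \times X^G$ meets every fibre of $p$ over $\mc{N}(X')$ in the whole fibre. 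Once the correct coherent model on $X'$ is produced, the multiplicity-space identity $\Hom_G(V,\Gamma(X,L)) = \Hom_G(V,\Gamma(X',\,\cdot\,))$ closes the argument in both directions, so this bookkeeping about pushforward is really the only point requiring care.
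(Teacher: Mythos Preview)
Your overall strategy is the same as the paper's: exploit the factorization $X = X' \times X^G$ with $G$ acting trivially on the second factor. One slip to note: $p^*\mc{P}_{X'}(V) = \mc{P}_{X'}(V) \boxtimes \mc{O}_{\A^m}$, not $\mc{P}_{X'}(V) \boxtimes \dd_{\A^m}$; the latter is indeed the correct formula for $\mc{P}_X(V)$, but it is not the $\dd$-module pullback along $p$. This does not affect your ``only if'' argument, since either $M \boxtimes \mc{O}_{\A^m}$ or $M \boxtimes \dd_{\A^m}$ works there (the paper uses the second).

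Where you and the paper differ is the ``if'' direction, and the paper's route is cleaner. Rather than pushing forward along $p$ and then trying to repair coherence, the paper works inside $M$: pick any nonzero finite-dimensional $G$-submodule $U \subset \Gamma(X,M)$ and set $L' := \dd_{X'}\cdot U$. This is a $G$-equivariant $\dd_{X'}$-submodule of $M$, automatically coherent, and supported on $\mc{N}(X')$; the hypothesis gives $\Hom_G(V,L') \neq 0$, hence $\Hom_G(V,M)\neq 0$. This is exactly the ``extract a coherent $\dd_{X'}$-piece'' you gesture at, made explicit in one line. Alternatively, your coherence worry is unnecessary: the definition of ``projective generator relative to $Z$'' is stated for all of $\QCoh$, so you may simply regard $M$ as a (non-coherent) $G$-equivariant $\dd_{X'}$-module via restriction of scalars along $\dd_{X'}\hookrightarrow \dd_X$ and apply the hypothesis directly---but then you should not call this $p_*$ in the $\dd$-module sense, which computes relative de~Rham cohomology and is a different functor.
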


\begin{proof}
Assume that $V$ is a projective generator relative to $\mc{N}(X') \times X^G$. Let $L$ be a coherent $G$-equivariant $\dd_{X'}$-module supported on $\mc{N}(X')$. Then $L' = L \boxtimes \dd_{X^G}$ is a $G$-equivariant coherent $\dd_X$-module supported on $\mc{N}(X') \times X^G$. Hence $\Hom_G(V,L \boxtimes \dd_{X^G}) \neq 0$. But 
$$
\Hom_G(V,L \boxtimes \dd_{X^G}) = \Hom_G(V,L)\otimes \dd_{X^G}
$$
and hence $\Hom_G(V,L) \neq 0$. 

Conversely, assume that $V$ is a projective generator relative to $\mc{N}(X') \subset X'$. Let $M$ be a coherent $G$-equivariant $\dd_X$-module supported on $\mc{N}(X') \times X^G$. Let $U \subset M$ be a non-zero finite dimensional $G$-submodule. Then $\dd_{X'} \cdot U =: L'$ is a coherent $G$-equivariant $\dd_{X'}$-submodule of $M$; it is supported on $\mc{N}(X')$. Hence $\Hom_G(V,L') \neq 0$. But $\Hom_G(V,L') \subset \Hom_G(V,M)$. Thus, $V$ is a projective generator relative to $\mc{N}(X') \times X^G$.   
\end{proof}

We now give the proof of Theorem \ref{thm:mainprojgenslice}.

\begin{proof}
	Since the proof is rather long, we give a short summary. Each irreducible $G$-equivariant $D$-module is supported on the closure of some Luna stratum $X_{(H,N_H)}$ of $X$. The stratum gives rise to a diagram 
	\begin{equation}\label{eq:explainLunaDmodgen}
	\begin{tikzcd}
	& S \ar[r,hook,"i"] \ar[d,"\phi"'] & G \times_H S \ar[d,"\varphi"] \\
	N_H' \ar[r,hook] & N_H & X. 
	\end{tikzcd}
	\end{equation}
	Pulling back along $\varphi$ and $i$, and pushing forward along $\phi$, we deduce that $V$ is a generator relative to modules supported on $X_{(H,N_H)}$ if and only if $V |_H$ is a generator relative to modules supported on the nullcone of $N_H'$. 
	
	To begin the proof, recall from section \ref{sec:LunastrataProjgen} that Luna's slice theorem implies that $X\git G$ has a finite stratification by smooth (possibly disconnected) locally closed strata $(X \git G)_{(H,N_H)}$. Let $X_{(H,N_H)}$ denote the preimage of $(X \git G)_{(H,N_H)}$ in $X$. 
	 
Assume that $L$ is an irreducible $G$-equivariant $\dd_X$-module. Then the support of $L$ is $G$-irreducible i.e. there exists an irreducible component $Z \subset \Supp \, L$ such that $\Supp \, L = G \cdot Z$; if $G$ is connected then we can just take $Z = \Supp \, L$. This implies that the image of $Z$ in $X \git G$ is closed and irreducible. Hence there exists a unique $(H,N_H) \in C_G(X)$ such that $X_{(H,N_H)} \cap \Supp \, L$ is open dense in $\Supp \, L$. Let us say that $L$ is an irreducible $\dd$-module associated to $(H,N_H)$. We will prove that $\Hom_G(V,L) \neq 0$ for all irreducible $L$ associated to $(H,N_H)$ if and only if $V$ is a projective generator relative to $\mc{N}(N_{H}') \subset N_{H}'$. 


First, by Lemma \ref{lem:projgenreomveinvariants}, it suffices to show that $\Hom_G(V,L) \neq 0$ for all irreducible $L$ to $(H,N_H)$ if and only if $V$ is a projective generator relative to $\mc{N}(N_{H}) \times N_H^H \subset N_{H}$. Since $\Supp \, L $ is closed and $G$-stable, we may choose $x \in \Supp \, L$ such that $G \cdot x$ is closed and the stabilizer of $x$ equals $H$. Then Luna's Slice Theorem \cite{Luna} says that there exists a slice $x \in S \subset X$ to the $G$-orbit of $x$ such that the natural map $\varphi \colon G \times_H S \rightarrow X$ has image $U$, an affine $G$-saturated open subset of $X$ and the map $\varphi \colon G \times_H S \rightarrow U$ is excellent. Since $U$ is $G$-saturated, $\Gamma(U,L|_U) = \C[U]^G \otimes_{\C[X]^G} \Gamma(X,L)$. Moreover, since $L$ is irreducible, it has no non-zero sections supported on the complement to $U$. Thus, $\Hom_G(V,\Gamma(X,L)) \neq 0$ if and only if $\Hom_G(V,\Gamma(U,L)) \neq 0$. Therefore, we deduce from Lemma \ref{lem:projgenexcellentmap} that $\Hom_G(V,\Gamma(X,L)) \neq 0$ for all $L$ associated to $(H,N_H)$ if and only if $V$ is a projective generator relative to $(G \times_H S)_{(H,N_H)} \subset G \times_H S$. 

The closed subset $(G \times_H S)_{(N_H)}$ equals $G \times_H S_{(H,N_H)}$. Under the equivalence 
$$
i^* \colon \Coh(\dd_{G \times_H S},G) \iso \Coh(\dd_S,H),
$$
the module $M$ is supported on $(G \times_H S)_{(H,N_H)}$ if and only if $i^* M$ is supported on $S_{(H,N_H)}$. Therefore, we deduce from Lemma \ref{lem:istarVrestrict} that $V$ is a projective generator relative to $(G \times_H S)_{(H,N_H)} \subset G \times_H S$ if and only if $V |_H$ is a projective generator relative to $S_{(H,N_H)} \subset S$. 

Next, Luna's Slice Theorem \cite{Luna} also says that there is a $H$-equivariant map $\phi \colon S \rightarrow N_H$, whose image is a $H$-saturated affine open subset $U'$ of $0$ such that the map $\phi \colon S \rightarrow U'$ is excellent. We deduce once again from Lemma \ref{lem:projgenexcellentmap} that $V |_H$ is a projective generator relative to $S_{(H,N_H)} \subset S$ if and only if $V |_H$ is a projective generator relative to $U_{(H,N_H)}' \subset U'$. Again, since $U' \subset N_H$ is saturated, if $M$ is an irreducible $H$-equivariant $\dd$-module supported on $(N_H)_{(H,N_H)}$ then $\Gamma(U',M) = \C[U']^G \otimes_{\C[N_H]^H} \Gamma(N_H,M)$. Moreover, since $M$ is irreducible, it has no non-zero sections supported on the complement to $U'$. Thus, $\Hom_H(V|_H,\Gamma(N_H,M)) \neq 0$ if and only if $\Hom_H(V|_H,\Gamma(U',M)) \neq 0$.
\end{proof}

\section{The visible case}\label{sec:the-visible-case}

In this section we explain how Theorem~\ref{thm:mainprojgenslice} can be used to give an alternative proof of Theorem~\ref{thm:mainprofconj} that avoids the use of the equivariant derived category in a large class of example - those varieties that are visible. Based on the geometry of $G$-representations, we say that an affine $G$-variety $X$ is \textit{visible} if every fiber $\pi^{-1}(x)$ of the quotient map $\pi: X \rightarrow X \git G$ consists of finitely many $G$-orbits.

\begin{lem}\label{lem:visibleXsmooth}
	Let $X$ be a smooth affine $G$-variety. 
	\begin{enumerate}
		\item[(i)] If $X$ is a $G$-representation then it is visible if and only if $\mc{N}(X)$ consists of finitely many $G$-orbits. 
		\item[(ii)] If $X$ is visible then each slice $N_H$ is a visible $H$-representation. 
	\end{enumerate}
\end{lem}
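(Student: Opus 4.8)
The plan is to prove the two parts largely independently, with part (i) serving as a basic dictionary between visibility and the nullcone, and part (ii) reducing the visibility of a slice representation to the visibility of $X$ via Luna's slice theorem. For part (i), suppose first that $X$ is a $G$-representation. The quotient map $\pi\colon X\to X\git G$ is homogeneous with respect to the scaling $\Cs$-action, so every fiber $\pi^{-1}(\xi)$ is isomorphic (as a $G$-variety, after rescaling) to a fiber lying ``over a line'' towards the origin; the content is that the fiber structure degenerates to that of the central fiber $\pi^{-1}(0)=\mc N(X)$. Concretely, I would argue: if $\mc N(X)$ is a finite union of $G$-orbits, then by a Bia\l ynicki-Birula / Kempf--Ness type limit argument (or directly: every fiber of $\pi$ contains a unique closed orbit, and one can use the $\Cs$-action to move any fiber to $\mc N(X)$ while only identifying orbits, never creating new ones) every fiber is a finite union of orbits, i.e. $X$ is visible. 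Conversely, visibility of $X$ immediately forces $\mc N(X)=\pi^{-1}(0)$ to be a finite union of orbits, since $0$ is a point of $X\git G$. The cleanest packaging uses that the number of $G$-orbits in $\pi^{-1}(\xi)$ is, by semicontinuity along the $\Cs$-orbit closure through $\xi$, bounded by the number in the special fiber $\mc N(X)$; this gives both directions at once.

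For part (ii), let $X$ be visible and smooth, and let $(H,N_H)\in C_G(X)$ with $x\in X$ a point with closed orbit, $\Stab_G(x)=H$, and $N_H$ an $H$-stable complement to $\fg\cdot x$ in $T_xX$. Luna's slice theorem gives an excellent (in particular étale) $G$-equivariant map $\varphi\colon G\times_H S\to X$ onto a $G$-saturated open $U\subseteq X$, and an excellent $H$-equivariant map $\phi\colon S\to N_H$ onto an $H$-saturated open $U'\ni 0$ in $N_H$. Visibility is a property of the fibers of the affine quotient, so I would check that it is inherited along each of these three operations: (a) restriction to a $G$-saturated open subset (fibers of the quotient are unchanged, so visibility is automatic); (b) passing across the equivalence $X\git G \simeq U$ of a saturated open and back along the étale base change defining an excellent map — here condition (c) in the definition of excellent, $X\to Y\times_{Y\git G}X\git G$ being a $G$-equivariant isomorphism, says precisely that fibers of $\pi_X$ over a point are isomorphic as $G$-varieties to fibers of $\pi_Y$ over the image point, so the orbit count is preserved; and (c) the induction/restriction equivalence relating $G\times_H S$-quotients to $S$-quotients, under which a fiber of $(G\times_H S)\git G = S\git H$ is the same on both sides. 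Chaining these: $X$ visible $\Rightarrow$ $U$ visible $\Rightarrow$ $G\times_H S$ visible $\Rightarrow$ $S$ visible $\Rightarrow$ $U'$ visible $\Rightarrow$ (since $U'\subseteq N_H$ is $H$-saturated open containing $0$, and visibility of an open saturated neighbourhood of $0$ in a representation forces finiteness of the orbit set of $\mc N(N_H)=\pi^{-1}(0)$, hence by part (i) of $N_H$ itself) $N_H$ visible.

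The main obstacle I anticipate is part (i): rigorously justifying that the orbit structure of a general fiber $\pi^{-1}(\xi)$ is controlled by (and in the finite case no worse than) that of $\mc N(X)$. The semicontinuity statement ``the number of $G$-orbits in $\pi^{-1}(\xi)$ is upper semicontinuous in $\xi$, with the $\Cs$-action contracting everything to $\pi^{-1}(0)$'' is intuitively clear but needs care — one wants a flat family over the line $\overline{\Cs\cdot\xi}$ whose special fiber is (a piece of) $\mc N(X)$ and whose generic fiber is $\pi^{-1}(\xi)$, together with the fact that orbits can only merge, not split, in a limit. An alternative, possibly cleaner route for the forward direction of (i) is to invoke a known characterization of visible representations (e.g. via Kac's work, or the observation that for a representation, visibility of $X$ is equivalent to $\C[X]$ being finite over $\C[X]^G\otimes$(the subring generated by a single generic fiber), which forces $\mc N(X)$ finite); but since only the equivalence ``$X$ visible $\iff \mc N(X)$ finitely many orbits'' is needed and both implications are essentially standard consequences of the $\Cs$-action on a graded affine quotient, I expect a short direct argument to suffice. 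Parts (a)--(c) of part (ii) are routine once the definition of excellent morphism is unwound, so no real difficulty there.
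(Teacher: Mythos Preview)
Your proposal is correct in outline, but differs from the paper's proof in both parts, and in part (i) you may be underestimating the difficulty of the step you flag.

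For part (i), the paper does not argue at all: it simply cites Corollary~3 to Proposition~5.1 of Popov--Vinberg. Your degeneration sketch is the right intuition, but the phrase ``only identifying orbits, never creating new ones'' is not a theorem you can invoke. In a flat $G$-equivariant family over $\A^1$ there is no general upper-semicontinuity for the number of $G$-orbits in fibers, and the naive scaling limit $\lim_{t\to 0} t\cdot v = 0$ collapses everything, so it does not distinguish orbits. A correct argument along your lines would need more structure (e.g.\ relating orbits in an arbitrary fiber to those in the slice nullcone and then back to $\mc{N}(X)$), which is essentially the content of the cited result. Since this is classical and you already intend to treat it as such, there is no real gap---just be aware the direct proof is less soft than your sketch suggests.

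For part (ii), your chain
\[
X \text{ visible} \Rightarrow U \Rightarrow G\times_H S \Rightarrow S \Rightarrow U' \Rightarrow N_H \text{ visible}
\]
via excellent morphisms and the induction equivalence is valid, and each step is indeed routine. The paper, however, takes a much shorter route: by part (i) it suffices to show $\mc{N}(N_H)$ has finitely many $H$-orbits, and Luna's paper (remark~2 of \S III.1) gives directly
\[
\pi^{-1}(\pi(x)) \;\simeq\; G \times_H \mc{N}(N_H)
\]
as $G$-spaces. Since $G$-orbits on $G\times_H Y$ biject with $H$-orbits on $Y$, visibility of $X$ at the single point $\pi(x)$ finishes the argument in one line. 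Your approach has the virtue of making explicit that visibility is preserved under each of the elementary operations in Luna's theorem; the paper's approach bypasses this by going straight to the fiber identification.
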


\begin{proof}
	Part (i) is Corollary 3 to Proposition 5.1 in \cite{PopovVinberg}. 
	
	Part (ii): by part (i) it suffices to show that $\mc{N}(N_H)$ consists of finitely many $H$-orbits. Let $x \in X_{(H,N_H)}$ with closed $G$-orbit and stabiliser $H$. Write $\pi_H \colon N_H \rightarrow N_H \git H$ for the quotient map. Then remark 2 of \cite[Section III.1]{Luna} says that 
	$$
	\pi^{-1}(\pi(x)) \simeq G \times_H \pi^{-1}_H(0) = G \times_H \mc{N}(N_H)
	$$
	as $G$-spaces. If $\pi^{-1}(\pi(x))$ consists of finitely many $G$-orbits, it follows that $\mc{N}(N_H)$ consists of finitely many $H$-orbits.  
\end{proof}

\begin{example}
	\begin{enumerate}
		\item A $G$-space with finitely many orbits is visible, and the result is easy to see in this case since $\QCoh(\dd_X,G)$ has only finitely many irreducible objects.
		\item The irreducible visible representations have been classified by Kac \cite{KacNilpotentorbits}. 
		\item Other examples of visible $G$-spaces include the adjoint representations $\mf g$ and Vinberg's $\theta$-representations; in particular the representation spaces of a cyclic quiver.   
	\end{enumerate}
\end{example}

In the visible case, Theorem \ref{thm:mainprojgenslice} implies:

\begin{cor}
	If $X$ is visible then there exists a projective generator $V$ of $\Coh(\dd_X,G)$. 
\end{cor}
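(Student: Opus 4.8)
The plan is to reduce, via Theorem~\ref{thm:mainprojgenslice}, from the statement that $\Coh(\dd_X,G)$ has a projective generator to finitely many assertions of the form: a suitable $H$-representation $W$ is a projective generator relative to $\mc{N}(N_H')$ in the visible $H$-representation $N_H'$. Indeed, by Theorem~\ref{thm:mainprojgenslice}, to produce a single $V \in \Rep(G)$ that works, it suffices to find, for each of the finitely many models $(H,N_H) \in C_G(X)$, a finite dimensional $H$-representation $W_{(H,N_H)}$ that is a projective generator relative to $\mc{N}(N_H')$, and then take $V$ to be any $G$-representation whose restriction to each $H$ contains $W_{(H,N_H)}$ as a summand (possible since $G$ is reductive — e.g. induce up and decompose, or just take a large enough tensor/sum). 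By Lemma~\ref{lem:visibleXsmooth}(ii), each slice $N_H$, hence each $N_H'$, is a visible $H$-representation, so the problem is genuinely reduced to the following local statement: \emph{if $W'$ is a visible $H$-representation with $H$ reductive, then there is a finite dimensional $H$-representation $W$ that is a projective generator relative to $\mc{N}(W')$.}

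To prove the local statement, the key point is that visibility forces $\mc{N}(W')$ to consist of finitely many $H$-orbits (Lemma~\ref{lem:visibleXsmooth}(i)). Consequently, any $H$-equivariant $\dd$-module supported on $\mc{N}(W')$ is supported on a finite union of orbits; restricting attention to irreducible objects (which, as noted after Definition~\ref{defn:projVDmod}, suffices), each irreducible $L$ has support equal to the closure of a single $H$-orbit $\mathcal{O}$, and on the open orbit $\mathcal{O}$ it is a $\dd$-module on a homogeneous space, hence classified by a representation of the (reductive, by Matsushima) stabilizer of a point of $\mathcal{O}$ together with an equivariant local system / connection datum. The upshot is that there are only finitely many irreducible $H$-equivariant $\dd$-modules supported on $\mc{N}(W')$: finitely many orbits, and on each orbit finitely many irreducibles because the stabilizer has finitely many irreducible representations entering (here one uses that the component group is finite and the identity component contributes via its — again finite — set of relevant representations for an irreducible equivariant connection on an affine homogeneous space; more carefully, the number of irreducible $\dd$-modules with a fixed homogeneous support is finite because the category of equivariant $\dd$-modules on $H/K$ is semisimple with finitely many simples, indexed by $\Irr(K/K^\circ)$ after accounting for the connection, and in the relevant geometric setting only finitely many occur). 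Let $L_1,\dots,L_m$ be this finite list of irreducibles. Each $L_j$ contains \emph{some} nonzero finite dimensional $H$-subrepresentation $U_j \subseteq \Gamma(W',L_j)$; set $W = \bigoplus_{j=1}^m U_j$. Then for any nonzero $L \in \Coh(\dd_{W'},H)$ supported on $\mc{N}(W')$, $L$ has an irreducible subquotient $L_j$, but more to the point $L$ has a nonzero irreducible \emph{submodule} $L_j$ (take any nonzero coherent submodule and pass to a simple submodule of it), whence $\Hom_H(W,\Gamma(W',L)) \supseteq \Hom_H(U_j,\Gamma(W',L)) \supseteq \Hom_H(U_j,\Gamma(W',L_j)) \neq 0$; thus $W$ is a projective generator relative to $\mc{N}(W')$.

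Assembling: for each model $(H,N_H)$ apply the local statement to $W' = N_H'$ to get $W_{(H,N_H)}$; let $V$ be a $G$-representation with $V|_H \supseteq W_{(H,N_H)}$ as a direct summand for every $(H,N_H) \in C_G(X)$ (finitely many conditions, each satisfiable, and a direct sum of solutions still works since "contains a summand that is a projective generator relative to $Z$" is inherited by larger representations — more summands only enlarge $\Hom_H(V|_H, i^*L)$); then by Theorem~\ref{thm:mainprojgenslice}, $V$ is a projective generator of $\Coh(\dd_X,G)$.

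The main obstacle I expect is the finiteness claim for irreducible equivariant $\dd$-modules supported on a finite union of orbits — specifically, showing that on each single orbit there are only finitely many irreducible equivariant $\dd$-modules, which amounts to controlling equivariant connections on an affine homogeneous space $H/K$ (finitely many, indexed essentially by $\Irr(\pi_0(K))$ once one checks the connected part contributes a unique such object, since $H/K$ affine and $K$ reductive makes the relevant fundamental-group-type invariant finite). The rest — the reduction via Theorem~\ref{thm:mainprojgenslice}, visibility of slices via Lemma~\ref{lem:visibleXsmooth}, and the "finitely many simples, take a finite dimensional subrepresentation of each" argument — is routine given the machinery already set up in the paper.
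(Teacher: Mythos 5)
Your proof is correct and follows essentially the same route as the paper: reduce via Theorem~\ref{thm:mainprojgenslice} and Lemma~\ref{lem:visibleXsmooth} to the nullcone of each (visible) slice representation, use the finitely-many-orbits property to conclude there are only finitely many irreducible equivariant $\dd$-modules supported there, and pick a finite dimensional subrepresentation detecting each. The only cosmetic difference is that the paper justifies the finiteness of simples by noting these modules are holonomic, whereas you argue orbit-by-orbit via representations of component groups of stabilizers; both are standard and equivalent here.
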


\begin{proof}
	By Theorem \ref{thm:mainprojgenslice} it suffices to show that there exists $V$ such that $V |_H$ is a projective generator relative to $\mc{N}(N_H')$ for all $(H) \in C_H(X)$. By Lemma \ref{lem:visibleXsmooth}, the fact that $X$ is visible implies that each nullcone $\mc{N}(N_H')$ consists of finitely many $H$-orbits. Therefore, it suffices to show that if $X$ is a visible $G$-module then there exists a projective generator $V$ relative to $\mc{N}(X)$. Since there are only finitely many $G$-orbits in $\mc{N}(X)$, the category of coherent $G$-equivariant $\dd_X$-modules, supported on $\mc{N}(X)$, consists of holonomic $\dd$-modules, and there are only finitely many irreducible modules. Call these $L_1, \ds, L_k$. Then we can easily choose $V$ such that $\Hom_G(V,L_i) \neq 0$ for each $i$. 
\end{proof}

\begin{remark}
	The existence of a projective generator was also shown in \cite[Theorem 1.5]{BellBoosSemisimple} under the restriction that $X$ is a visible $G$-representation and that the map $\mu^{-1}(0) \git G \rightarrow X \git G \times X^* \git G$ is finite map. However, examples show that this is a very strong restriction.  
\end{remark}

\section{Torus Case}\label{sec:torus-case}

In this section we give an alternative proof Theorem \ref{thm:mainprofconj} in the case where $G$ is a torus. Let $T =(\C^\times)^r$ be a torus of rank $r$ acting regularly on a smooth affine variety $X$. By Theorem \ref{thm:mainprojgenslice}, we may assume that $X \simeq \C^n$ with $T$ acting linearly. Let $\lambda_1,\ldots, \lambda_n \in X_\bullet(T) = \Z^r$ be the weights of this action. The $\Z$-submodule of $\Z^r$ generated by $\lambda_1,\ldots, \lambda_n$ is denoted $A$. We write $x_1, \ldots, x_n$ for the coordinate functions on $X$. We begin with:

\begin{lem}\label{lem:everywheresupportedTmod}
	Let $\mc M$ be a $T$-equivariant $\dd_X$-module, and suppose that for all weight vectors $m\in \mc M$, $x_km \neq 0$ and $\partial_k m \neq 0$. If $m \in \mc M$ is any non-zero weight vector of weight $\lambda$ then there exists a non-zero weight vector $m' \in \mc M$ of weight $\lambda'$ for any $\lambda' \in \lambda + A$.
\end{lem}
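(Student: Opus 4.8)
The plan is to prove the statement by moving between weight spaces using the operators $x_k$ and $\partial_k$, which shift weights by $\pm\lambda_k$. Since $A$ is generated by $\lambda_1,\dots,\lambda_n$, it suffices to show that from a nonzero weight vector $m$ of weight $\lambda$ we can produce nonzero weight vectors of weight $\lambda+\lambda_k$ and of weight $\lambda-\lambda_k$ for each $k=1,\dots,n$; iterating and using that every element of $A$ is an integer combination of the $\lambda_k$ then gives the claim for all $\lambda'\in\lambda+A$. So first I would record that $x_k$ raises the $T$-weight by $\lambda_k$ and $\partial_k$ lowers it by $\lambda_k$ (this is exactly the compatibility of the $T$-action with the $\dd_X$-module structure on $\C^n$ with linear action of weights $\lambda_i$).

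Next, I would observe that the hypothesis "$x_k m\neq 0$ and $\partial_k m\neq 0$ for all weight vectors $m$" is exactly what we need, applied to the weight vectors we successively construct. Concretely: given a nonzero weight vector $m$ of weight $\lambda$, the vector $x_k m$ is a nonzero weight vector of weight $\lambda+\lambda_k$ by hypothesis, and $\partial_k m$ is a nonzero weight vector of weight $\lambda-\lambda_k$ by hypothesis. Since the newly produced vectors are again nonzero weight vectors in $\mc M$, the hypothesis applies to them as well, so the process can be iterated indefinitely. Thus for any word $w$ in the symbols $x_1^{\pm},\dots,x_n^{\pm}$ (interpreting $x_k^{-}$ as $\partial_k$), the vector $w\cdot m$ is a nonzero weight vector whose weight is $\lambda$ plus the corresponding integer combination of the $\lambda_k$.

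Finally, I would close the argument: an arbitrary element of $\lambda+A$ has the form $\lambda+\sum_k a_k\lambda_k$ with $a_k\in\Z$, and choosing a word $w$ that applies $x_k$ exactly $a_k$ times when $a_k\ge 0$ and $\partial_k$ exactly $|a_k|$ times when $a_k<0$ produces a nonzero weight vector $m'=w\cdot m$ of weight $\lambda'=\lambda+\sum_k a_k\lambda_k$. I do not anticipate a serious obstacle here; the one point requiring a line of care is that one must apply the hypothesis to each intermediate weight vector (not just to $m$), which is legitimate precisely because each intermediate vector is again a nonzero weight vector of $\mc M$, so no separate nonvanishing input is needed beyond the standing hypothesis. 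A secondary (routine) point is verifying the weight shift rules for $x_k$ and $\partial_k$, which follows immediately from the equivariance of the $\dd_X$-module structure under the linear $T$-action.
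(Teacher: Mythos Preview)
Your proposal is correct and follows essentially the same approach as the paper: move between weight spaces using the operators $x_k$ and $\partial_k$, which shift the $T$-weight by $\pm\lambda_k$, and use the standing hypothesis at each step to guarantee nonvanishing. The paper writes the resulting operator as the ordered product $R_1R_2\cdots R_n$ with $R_k=x_k^{N_k}$ or $\partial_k^{-N_k}$, but this is exactly your word $w$, and your explicit remark that the hypothesis must be applied to each intermediate weight vector is precisely the content of the paper's final sentence.
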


\begin{proof}
	Given 
	\[
	\lambda' = \lambda + N_1 \lambda_1 + \cdots + N_n \lambda_n \in \lambda + A,
	\] 
	let $R_k$ denote either $x_k^{N_k} \in \dd_X$ if $N_k \geq 0$, or $\partial_k^{-N_k} \in \dd_X$ if $N_k <0$. Then consider the vector
	\[
	m' = R_1 R_2 \cdots R_n m \in \mc M
	\]
	We will show that $m'$ is the required weight vector of weight $\lambda'$. First note that operating by $x_k$ takes the $\mu$-weight space in to the $\mu + \lambda_k$-weight space. Similarly, operating by $\partial_k$ takes the $\mu$-weight space to the $\mu-\lambda_k$-weight space. It follows that $m'$ is either zero or is a weight vector of weight $\lambda'$ as required. But $m'$ is necessarily non-zero by the assumption that no weight vector is annihilated by either $x_k$ or $\partial_k$ for any $k \in \{1,2,\ldots , n\}$.
\end{proof}

Assume for a moment that $n = 1$. The delta module supported at $0$ is the irreducible $\dd$-module $\dd / \dd x_1 = \C[\partial_1] \delta_1$; here $\delta_1$ is the image of the unit in the quotient. The delta module can be endowed with a $T$-equivariant structure. This is uniquely defined by stipulating that $\delta_1$ is a weight vector of weight $\lambda_1$.  

For a finite subset $S\subseteq \Z^r$, we denote by $\mc P(S) = \mc P(\bigoplus_{\lambda \in S} \C_\lambda)$.

\begin{prop}
	There is a finite subset $S\subseteq \Z^r$ such that $\bigoplus_{\lambda \in S} \C_\lambda$ is a projective generator of $\Coh(\dd_X, T)$.
\end{prop}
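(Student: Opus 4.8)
My plan is to reformulate the statement in terms of irreducible modules and then to induct on $n$.

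First, $\mc{P}(S)$ is automatically compact and projective (it is $\mc{P}_X(\bigoplus_{\lambda\in S}\C_\lambda)$), and by the adjunction of Section~\ref{sec:projeobjectinequivactD} it generates $\Coh(\dd_X,T)$ if and only if every nonzero $T$-equivariant coherent $\dd_X$-module $\ms{M}$ has a weight vector whose weight lies in $S$. It suffices to check this on irreducible $\ms{M}$: a nonzero $\ms{M}$ has an irreducible quotient $L$, and the $T$-weights occurring in any subquotient of $\ms{M}$ already occur in $\ms{M}$, so a weight vector of $L$ with weight in $S$ lifts to one of $\ms{M}$. I would also record the elementary fact that every $T$-weight occurring in a $T$-equivariant coherent $\dd_X$-module is trivial on $\Lie(K)$, where $K=\ker(T\to\GL(X))$ (because $\mf{t}$ acts through the comoment map, which kills $\Lie(K)$); hence all such weights lie in the lattice of characters trivial on $K^\circ$, which contains $A$ with finite index, so only finitely many cosets of $A$ can occur.

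Now induct on $n$, with the statement quantified over all linear torus actions on $\C^n$; the base case $n=0$ is trivial since $\Coh(\dd_{\pt},T)=\mathrm{Vect}$ (strong equivariance for the connected group $T$), so $S=\{0\}$ works. For the inductive step let $\ms{M}$ be nonzero irreducible. Because $\ms{M}$ is irreducible, for each $k$ the $\dd_X$-submodule $\Gamma_{\{x_k=0\}}\ms{M}$ of $x_k$-power-torsion sections is either $0$ or all of $\ms{M}$, so either $\ms{M}$ is supported on the coordinate hyperplane $\{x_k=0\}$ or $x_k$ acts injectively on $\ms{M}$. This gives three cases. (A) If $\ms{M}$ is supported on some $\{x_k=0\}$, then by Kashiwara's theorem $\ms{M}=(i_k)_*\mc{N}$ with $\mc{N}$ an irreducible $T$-equivariant coherent $\dd$-module on $\C^{n-1}$ (the torus acting through the weights $\{\lambda_j:j\ne k\}$); the inductive hypothesis supplies a finite set for $\C^{n-1}$ and a weight vector of $\mc{N}$ in it, and the description of the equivariant structure on a pushforward along a coordinate hyperplane (the delta-module computation preceding the statement, generalized) turns this into a weight vector of $\ms{M}$ of weight in a fixed translate of that set. (B) If every $x_k$ acts injectively but some $\partial_k$ does not, then $\partial_k$ is in fact locally nilpotent on $\ms{M}$ (since $\ms{M}$ is generated by a vector it annihilates), so the Fourier transform $\mathcal F_k$ in the $k$-th variable carries $\ms{M}$ to a $T$-equivariant module supported on $\{x_k=0\}$, i.e.\ to case (A); as $\mathcal F_k$ transforms the set of occurring weights by a fixed affine involution depending only on $\lambda_k$, we again land in a finite set. (C) If every $x_k$ and every $\partial_k$ acts injectively on $\ms{M}$, then Lemma~\ref{lem:everywheresupportedTmod} applies and $\ms{M}$ has a nonzero weight vector at every weight of a complete coset $\mu+A$; by the observation above this is one of finitely many cosets, so it suffices to put one representative of each such coset into $S$. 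Taking $S$ to be the finite union of the translated sets from (A), their $\mathcal F_k$-transforms from (B), and the coset representatives from (C) then proves the proposition.

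The work is essentially bookkeeping rather than conceptual: the points to watch are that the trichotomy is genuinely exhaustive for irreducibles (the $\Gamma_{\{x_k=0\}}$ argument together with its Fourier-dual), that one computes precisely how $T$-weights transform under the Kashiwara pushforward and under $\mathcal F_k$ so that cases (A) and (B) land in explicit finite sets, and --- the only genuinely global input --- that the weights of a $T$-equivariant coherent $\dd_X$-module occupy only finitely many cosets of $A$, which is exactly what prevents case (C) from contributing infinitely many elements to $S$.
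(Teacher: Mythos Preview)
Your proof is correct and follows the same three-case analysis as the paper's (module supported on a coordinate hyperplane; module $\partial_k$-torsion; all $x_k$ and $\partial_k$ acting injectively, so Lemma~\ref{lem:everywheresupportedTmod} applies). The only differences are organizational: the paper inducts on $r+n$ and disposes of the case where the $\lambda_i$ fail to span $\Q^r$ by passing to the quotient torus $T/Z$, whereas you induct on $n$ alone using your observation that all occurring weights are trivial on $\Lie(K)$ and hence lie in a lattice containing $A$ with finite index; and in case~(B) the paper uses the pullback description $\ms{M}\simeq \C[x_k]\otimes \ms{M}'$ directly rather than invoking the Fourier transform.
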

\begin{proof}
We prove the proposition by induction on $r+n$, the sum of the rank of the torus and the dimension of $X$. Note that the statement is evident when $r=0$ or when $n=0$.

First suppose that $\lambda_1 \, \ldots, \lambda_n$ do not span $\Q^r$, or equivalently, the map $\mf t \to \C^n$ induced by the action is not injective. Then there is a non-trivial subtorus $Z$ of $T$ which acts trivially on $X$. In this case, the category $\Coh(\dd_X,T)$ is equivalent to $\Coh(\dd_X,T/Z)$. Thus the result follows from the inductive hypothesis in this case.

Now assume that $\lambda_1 \, \ldots, \lambda_n$ do span $\Q^r$, or equivalently, that $\Z^r/\langle \lambda_1,\ldots, \lambda_n\rangle$ is finite. Let $S'\subseteq \Z^r$ denote a set of representatives for each coset. Let $X_i = \{x_i = 0\} \subseteq X$. The action of $T$ restricts to each hyperplane $X_i$, and by the inductive hypothesis there is a finite subset $S_i \subseteq \Z^r$ such that $\bigoplus_{\lambda \in S_i} \C_\lambda$ is a projective generator for $\Coh(\dd_{X_i},T)$.

Now let $S$ be the subset of $\Z^r$ given by the union of:
\begin{itemize}
	\item $S_i$ for $i =1, \ldots, n$,
	\item $S_i -\lambda_i$ for $i=1, \ldots, n$; and
	\item $S'$
\end{itemize}
We will show that $S$ is the desired finite set of weights such that $\bigoplus_{\lambda \in S} \C_\lambda$ is a projective generator of $\Coh(\dd_X,T)$.

To show this, it suffices to show that for each $T$-equivariant $D$-module $\mc M$ on $X$, there is weight $\mu \in S$ such that $\mc M$ has a non-zero weight space of weight $\mu$. There are three possibilities: 
\begin{enumerate}
	\item There exists a weight vector $m \in \mc M$ such that $x_k^Nm=0$ for some $k\in \{1,2,\ldots, n\}$ and $N>0$. Replacing $\mc M$ by $\dd_X m$ if necessary, we may assume that $\mc M$ is supported on the hyperplane $X_k$. Thus by Kashiwara's Lemma \cite[Theorem 1.6.1]{HTT}, 
	\[
	\mc M = i_{k\ast}( \mc M') \simeq \C[\partial_k]\delta_k \otimes \mc M'
	\]
	where $\mc M'$ is a $\dd_{X_k}$-module, and $i_k$ denotes the inclusion $X_k \hookrightarrow X$. The $\dd_{X_k}$-module $\mc M'$ is necessarily $T$-equivariant, and thus by the inductive hypothesis has a weight vector $m'$ with weight in $\mu' \in S_k$. It follows that there is a weight vector of weight $\mu' - \lambda_k$ in $\mc M\simeq \C[\partial_k]\delta_k \otimes \mc M'$ of the form $\delta_k \otimes m'$.
	\item There exists a weight vector $m \in \mc M$ such that $\partial_k^Nm=0$ for some $k\in \{1,2,\ldots, n\}$ and $N>0$. Replacing $\mc M$ by $\dd_X m$ as before, we may assume the Fourier transform of $\mc M$ is supported on $X_k$ and thus
	\[
	\mc M = p_{k}^\circ( \mc M') \simeq \C[x_k] \otimes \mc M'
	\]
	where $\mc M'$ is a $\dd_{X_k}$-module and $p_k:X\to X_k$ is the projection. As before, there is a weight vector $m'$ of weight $\mu' \in S_k$ in $\mc M'$ and thus there is a weight vector of weight $\mu'$ in $\mc M$ of the form $1 \otimes m'$.
	\item If neither of the first two possibilities holds it follows from Lemma \ref{lem:everywheresupportedTmod} that for every weight vector $m \in \mc M$ of weight $\lambda$, there is a weight vector $m'$ of weight $\lambda'$ for each $\lambda'$ in the coset $\lambda + A$. 
	\end{enumerate}
In each of these three cases, there is a weight vector in $\mc M$ with weight in the given finite subset $S$, as required.
\end{proof}

\subsection*{Example}\label{ex:toruseqDmod1}

We finish the section with a simple example that none the less demonstrates well the general situation. Let $G = \Cs$ acting on $\mathbb{A}^1$ with weight $a \in \Z \smallsetminus \{ 0 \}$. Using the Fourier transform, we may assume without loss of generality that $a > 0$. Since there are only two $G$-orbits in $\mathbb{A}^1$, all equivariant $\dd$-modules are holonomic. By applying the results of \cite{BdMonvel} one can show that the category $\Coh(\dd_{\mathbb{A}^1},G)$ is in fact of finite type i.e. has only finitely many indecomposable objects up to isomorphism. Firstly,  
$$
\Coh(\dd_{\mathbb{A}^1},G) = \bigoplus_{\eta \in \Z / a \Z} \Coh(\dd_{\mathbb{A}^1},G)_{\eta},
$$
where $\Coh(\dd_{\mathbb{A}^1},G)_{\eta}$ is the full subcategory of all $\dd$-modules $\ms{M}$ with weights in the coset $\eta$. For any integer $b \in \eta$, we define the cyclic $G$-equivariant $\dd$-module  
$$
\mc{P}(b) = \dd_X \cdot v_b, \quad \textrm{ with } \quad a x \partial \cdot v_b = b v_b \quad \textrm{ and } \quad t \cdot v_b = t^b v_b \quad \forall \, t \in \Cs.
$$
It is projective in $\Coh(\dd_{\mathbb{A}^1},G)$. For $\eta \neq 0$ and any integer $b \in \eta$, the projective module $\mc{P}(b)$ is irreducible and defines an equivalence $\Coh(\dd_{\mathbb{A}^1},G)_{\eta} \simeq \mathrm{Vect}_{\C}$. The category $\Coh(\dd_{\mathbb{A}^1},G)_{0}$ contains the four indecomposable modules 
$$
\C[x], \quad \C[\partial] \otimes \delta_{-a}, \quad \mc{P}(0), \quad \mc{P}(-a).
$$   
We have $\mc{P}(0) \simeq \mc{P}(an)$ and $\mc{P}(-a) \simeq \mc{P}(-an)$ for all $n > 0$. This implies that $\Coh(\dd_{\mathbb{A}^1},G)_{0}$ is equivalent to $\Lmod{R_0}$, where $R_0 = \C Q / I$ with $Q$ the quiver 
$$
\begin{tikzcd}
0 \ar[r,"\alpha"',bend right] & 1 \ar[l,"\beta"', bend right] 
\end{tikzcd} 
$$
and $I$ the ideal of the path algebra $\C Q$ generated by the admissible relations $\{ \alpha \beta, \beta \alpha \}$. The non-split short exact sequences 
$$
0 \rightarrow  \C[\partial] \otimes \delta_{-a} \rightarrow \mc{P}(0) \rightarrow \C[x] \rightarrow 0 
$$
$$
0 \rightarrow  \C[x] \rightarrow \mc{P}(-1) \rightarrow \C[\partial] \otimes \delta_{-a} \rightarrow 0 
$$
show that $R_0$ has infinite global dimension.  

In total, 
$$
\mc{P} = \mc{P}(-a) \oplus \mc{P}(0) \oplus \mc{P}(1) \oplus \cdots \oplus \mc{P}(a-1)
$$
is a projective generator of $\QCoh(\dd_{\mathbb{A}^1},G)$ and the latter category is equivalent to $\LMod{R}$, where $R = R_0 \oplus \C^{\oplus (a-1)}$. 

\begin{remark}
	Global dimension: The category $\QCoh(\dd_X)$ of quasi-coherent $\dd_X$-modules has finite global dimension; see \cite{BjorkBook}. Despite the fact that $\QCoh(\dd_X,G)$ has enough projectives the above example explicitly shows that $\QCoh(\dd_X,G)$ can have infinite global dimension.     
\end{remark}

\section{A recollement pattern}

We fix a reductive connected algebraic group $G$, $T \subset G$ a maximal torus and $P = \Hom(T,\Cs)$ the corresponding weight lattice. Let $P^+$ be the subset of dominant weights, which we identify with the set of irreducible finite dimensional representations of $G$. The following is presumably well-known, but we were unable find a suitable reference so we include a complete proof.

\begin{lem}\label{lem:QCohGrothendieck}
	The category $\QCoh(\dd_X,G)$ is a Grothendieck category. 
\end{lem}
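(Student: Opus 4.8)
The plan is to verify the three axioms that, together with the existence of a generator, characterise a Grothendieck category: (AB3) existence of arbitrary coproducts, (AB5) exactness of filtered colimits, and existence of a generator. Abelianness is already built into the definition of $\QCoh(\dd_X,G)$, so the work is in these three points. First I would recall that the forgetful functor to $\QCoh(\dd_X)$ (or, even more forgetfully, to $\QCoh(\mc O_X)$) is exact, faithful, and creates all colimits: an equivariant structure $\varphi$ on a colimit of equivariant modules is canonically induced because $p_2^*$ and $\sigma^*$ commute with colimits (they are, up to the flat base change along $\sigma$ and $p_2$, given by tensoring over $\mc O_X$ with $\mc O_{G\times X}$, which is flat), and the cocycle and rigidity conditions are preserved since they are equalities between morphisms that are themselves colimits of the given ones. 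Since $\QCoh(\dd_X)$ is a Grothendieck category (indeed $\QCoh(\dd_X)\simeq \LMod{\dd_X}$ for $X$ affine, and module categories over a ring are Grothendieck), AB3 and AB5 transfer immediately: a coproduct in $\QCoh(\dd_X,G)$ is computed on underlying $\dd_X$-modules, and a filtered colimit likewise, so exactness of filtered colimits follows from exactness downstairs together with the fact that the forgetful functor is exact and conservative.

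It then remains to produce a generator. Here I would invoke the objects $\mc P_X(V)$ of Definition \ref{defn:projVDmod}: for $V$ ranging over (isomorphism classes of) finite dimensional representations of $G$ — or even just over a set containing one representative in each isomorphism class of irreducibles — the adjunction $\Hom_{(\dd_X,G)}(\mc P_X(V),L)=\Hom_G(V,\Gamma(X,L))$ of the Lemma in Section \ref{sec:projeobjectinequivactD} shows that for any nonzero $L$ there is some $V$ with $\Hom_{(\dd_X,G)}(\mc P_X(V),L)\neq 0$; hence $\{\mc P_X(V)\}_V$ is a set of generators, and its coproduct $\bigoplus_V \mc P_X(V)$ is a single generator. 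One subtlety worth stating carefully: the collection of $V$'s must be a \emph{set}, which it is since $G$ is an affine algebraic group and its finite dimensional representations, up to isomorphism, form a set (they are classified, for $G$ reductive, by dominant weights as in Section \ref{sec:the-visible-case}, but one does not even need reductivity for set-theoretic purposes).

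The main obstacle I anticipate is not conceptual but bookkeeping: one must check that the forgetful functor genuinely creates colimits, i.e. that the induced equivariant structure on a colimit satisfies the cocycle identity of \cite[Definition 11.5.2]{HTT} and the rigidity condition $s^*\varphi=\mathrm{Id}$. This is where one has to be slightly careful that the relevant pullback functors $p_2^*,\sigma^*,s^*$ on the category of $\dd$-modules on $G\times X$ (resp.\ $X$) are colimit-preserving — true because they are pullbacks along smooth morphisms, hence exact and given by an appropriate base-change formula — so that $\varphi:=\mathrm{colim}\,\varphi_i$ makes sense and the defining diagrams, being colimits of commuting diagrams, still commute. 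Once this is in place the argument is routine. An alternative, if one prefers to avoid even this much checking, is to identify $\QCoh(\dd_X,G)$ with the category of comodules (equivalently, the category of quasi-coherent $\dd$-modules on the simplicial scheme $[X/G]_\bullet$, i.e.\ descent data), for which being Grothendieck is a standard fact about categories of sheaves on a site; but the direct verification above seems cleaner to write out.
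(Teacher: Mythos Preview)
Your proposal is correct and follows essentially the same approach as the paper: verify AB3 and AB5 via the forgetful functor to a known Grothendieck category, then exhibit $\bigoplus_V \mc P_X(V)$ as a generator using the adjunction $\Hom_{(\dd_X,G)}(\mc P_X(V),L)=\Hom_G(V,\Gamma(X,L))$. The only difference is implementation: the paper avoids the bookkeeping you anticipate by first identifying $\QCoh(\dd_X,G)$ (since $X$ is affine) with the category of $\dd(X)$-modules carrying a compatible rational $G$-module structure, and then checking AB5 as the lattice identity $(\sum_i A_i)\cap B=\sum_i(A_i\cap B)$, which already holds in $\LMod{\dd(X)}$. This sidesteps any discussion of $p_2^*,\sigma^*,s^*$ commuting with colimits, but is otherwise the same argument.
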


\begin{proof}
	Since $X$ is affine, we identify $\QCoh(\dd_X,G)$ with the category of all $\dd(X)$-modules $M$ that are also rational $G$-modules, the multiplication map $\dd(X) \o M \rightarrow M$ is $G$-equivariant and the differential of the $G$-action on $M$ agrees with the action of $\nu_X(\mf{g})$.  
	
	In this case, it is clear that $\QCoh(\dd_X,G)$ admits arbitrary direct sums since both the category of $\dd(X)$-modules and the category of rational $G$-modules admit arbitrary direct sums, which in both cases commute with the forgetful functor to vector spaces. Moreover, if $(A_i)_{i \in I}$ is an increasing directed family of subobjects of $A \in \QCoh(\dd_X,G)$ and $B$ any $G$-equivariant submodule of $A$ then $(\sum_{i \in I} A_i) \cap B = \sum_{i \in I} (A_i \cap B)$ since this already holds in the category of $\dd(X)$-modules. Finally, the object 
	$$
	\mc{P}_X := \bigoplus_{V \in \check{G}} \mc{P}_X(V)
	$$
	is a generator in $\QCoh(\dd_X,G)$. 
\end{proof}

We deduce that $\QCoh(\dd_X,G)$ admits injective envelops. 

\begin{remark}
	Lemma \ref{lem:QCohGrothendieck} implies that $\QCoh(\dd_X,G)$ admits arbitrary products, or more generally colimits. However, the reader is cautioned that these products differ from the corresponding products in $\QCoh(\dd_X)$. 
\end{remark}

For any subset $U \subset P^+$, we define $\QCoh(\dd_X,G)_{U}$ to be the full subcategory of $\QCoh(\dd_X,G)$ consisting of all modules $\ms{M}$ with $\Gamma(X,\ms{M})_{\lambda} = 0$ for all $\lambda \in U$. Let $\QCoh(\dd_X,G)(U)$ be the quotient category $\QCoh(\dd_X,G) / \QCoh(\dd_X,G)_{U}$. 

\begin{prop}\label{prop:recollementofweightsXGaffine}
	The quotient functor $j^* \colon \QCoh(\dd_X,G) \rightarrow \QCoh(\dd_X,G)(U)$ admits both a left adjoint $j_*$ and a right adjoint $j_!$. Similarly, the inclusion $i_* \colon \QCoh(\dd_X,G)_U \rightarrow \QCoh(\dd_X,G)$ admits left and right adjoints $i^*, i^!$. This forms a full recollement pattern
	\begin{equation}\label{eq:weightsrecollmentDXsmooth}
	\begin{tikzcd}
	\QCoh(\dd_X,G)_U \ar[r,"i_*"] & \QCoh(\dd_X,G) \ar[r,"j^*"] \ar[l,bend left,"i^*"] \ar[l,bend right,"i^!"'] & \QCoh(\dd_X,G)(U) \ar[l,bend left,"j_*"] \ar[l,bend right,"j_!"'].
	\end{tikzcd}
	\end{equation}
\end{prop}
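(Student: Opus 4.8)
The plan is to exhibit \eqref{eq:weightsrecollmentDXsmooth} as the standard recollement attached to a Serre subcategory, the only non-formal input being the existence of the two left adjoints $i^*$ and $j_*$; these will be extracted from the special feature of $\QCoh(\dd_X,G)$ that arbitrary products are exact and are computed ``weight space by weight space''. Concretely, working with the description of $\QCoh(\dd_X,G)$ from the proof of Lemma~\ref{lem:QCohGrothendieck} as rational $G$-equivariant $\dd(X)$-modules, each functor $\ms M\mapsto\Gamma(X,\ms M)_\lambda$ is exact (exactness of $\Gamma(X,-)$ on the affine $X$, together with reductivity of $G$) and commutes with arbitrary direct sums and direct products, and, as recorded in the remark following Lemma~\ref{lem:QCohGrothendieck}, the product $\prod_i\ms M_i$ in $\QCoh(\dd_X,G)$ is the rationalisation of the product formed in $\dd(X)$-modules, so that $\Gamma(X,\prod_i\ms M_i)_\lambda=\prod_i\Gamma(X,\ms M_i)_\lambda$. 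As a product of surjections of vector spaces is surjective, it follows that the product functor is exact on $\QCoh(\dd_X,G)$ (axiom $\mathrm{AB}4^{\ast}$), and hence that $\QCoh(\dd_X,G)_U$ is a Serre subcategory which is closed under arbitrary direct sums \emph{and} arbitrary direct products.

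I would then produce the adjoints to $i_*$. Since $\QCoh(\dd_X,G)_U$ is closed under subobjects, quotients and coproducts, the inclusion $i_*$ has the usual right adjoint $i^!$, sending $\ms M$ to the sum of all its subobjects that lie in $\QCoh(\dd_X,G)_U$ (this sum is a quotient of their direct sum, hence again lies in $\QCoh(\dd_X,G)_U$). Dually, because $\QCoh(\dd_X,G)_U$ is closed under subobjects, quotients and \emph{products}, the family of subobjects $\ms M'\subseteq\ms M$ with $\ms M/\ms M'\in\QCoh(\dd_X,G)_U$ is stable under arbitrary intersection: if $\ms M_0$ is such an intersection then $\ms M/\ms M_0$ embeds into the product of the quotients $\ms M/\ms M'$ and so lies in $\QCoh(\dd_X,G)_U$. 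Taking $\ms M_0$ to be the smallest such subobject, $i^*\ms M:=\ms M/\ms M_0$ is the largest quotient of $\ms M$ lying in $\QCoh(\dd_X,G)_U$, and a short argument with kernels gives $i^*\dashv i_*\dashv i^!$.

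For the adjoints to $j^*$ I would first invoke Gabriel's localization theory: $\QCoh(\dd_X,G)_U$ is a localizing subcategory of the Grothendieck category $\QCoh(\dd_X,G)$ (Serre and closed under direct sums, as just observed), so $\QCoh(\dd_X,G)(U)$ is again a Grothendieck category, $j^*$ is exact, $\ker j^*=\QCoh(\dd_X,G)_U=\im i_*$, and $j^*$ has a fully faithful right adjoint $j_!$ (the section functor). To obtain the left adjoint $j_*$, I would verify that $j^*$ preserves arbitrary products and then invoke the adjoint functor theorem for locally presentable categories: $j^*$ is cocontinuous (being a left adjoint to $j_!$) and exact, so preservation of products is the only missing ingredient, and then $j^*$, preserving all small limits and being accessible, has a left adjoint. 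To check product-preservation, write $\eta_i\colon\ms M_i\to j_!j^*\ms M_i$ for the units; each $\eta_i$ has kernel and cokernel in $\QCoh(\dd_X,G)_U$, and since the product functor is exact and $\QCoh(\dd_X,G)_U$ is closed under products, $\prod_i\eta_i$ again has kernel and cokernel in $\QCoh(\dd_X,G)_U$; applying the exact functor $j^*$, which kills $\QCoh(\dd_X,G)_U$, identifies $j^*(\prod_i\ms M_i)$ with $j^*(\prod_i j_!j^*\ms M_i)$, which is the product of the $j^*\ms M_i$ taken in $\QCoh(\dd_X,G)(U)$ (limits in the reflective localization are computed by applying $j^*$ to the corresponding limit of the $j_!j^*\ms M_i$).

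Finally, the data $i^*\dashv i_*\dashv i^!$ and $j_*\dashv j^*\dashv j_!$, together with $i_*$ fully faithful, $j^*$ exact, $\ker j^*=\im i_*$, and $j_!$ (and $j_*$) fully faithful, assemble into a recollement in the standard way. The step I expect to be the genuine obstacle is the existence of the left adjoints $i^*$ and $j_*$, both of which rest on the $\mathrm{AB}4^{\ast}$ property of $\QCoh(\dd_X,G)$ and the stability of $\QCoh(\dd_X,G)_U$ under products---i.e.\ on the fact that everything here is controlled weight space by weight space. (Alternatively, one could avoid the abstract argument altogether: $\{\mc P_X(V_\lambda)\}_{\lambda\in P^+}$ is a set of compact projective generators, so $\QCoh(\dd_X,G)\simeq\LMod{\Lambda}$ for the locally unital ring $\Lambda=\End\big(\bigoplus_{\lambda\in P^+}\mc P_X(V_\lambda)\big)^{\mathrm{op}}$ with its orthogonal idempotents $e_\lambda$ satisfying $e_\lambda M=\Gamma(X,M)_\lambda$; under this equivalence $\QCoh(\dd_X,G)_U=\{M:e_\lambda M=0\ \text{for all }\lambda\in U\}$ and \eqref{eq:weightsrecollmentDXsmooth} is the standard idempotent recollement $\LMod{\Lambda/\Lambda e_U\Lambda}\to\LMod{\Lambda}\to\LMod{e_{U^c}\Lambda e_{U^c}}$---with $e_U=\sum_{\lambda\in U}e_\lambda$ and $e_{U^c}=\sum_{\lambda\notin U}e_\lambda$---whose six functors are given by tensoring and homming with the bimodules $\Lambda/\Lambda e_U\Lambda$ and $\Lambda e_{U^c}$.)
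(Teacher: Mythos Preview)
Your argument is correct but follows a different line than the paper's. The paper invokes the criterion of Geigle--Lenzing (\cite{GeigleLenzingQuotientcat}, Proposition~2.2 and its dual): localizing follows from closure under direct sums, while for colocalizing the paper exhibits, for each $\ms{M}$, the explicit exact sequence $\mc{P}_X(U)^{(I)} \to \ms{M} \to \ms{M}/\ms{M}(U) \to 0$, where $\ms{M}(U)$ is the $\dd(X)$-submodule generated by the $U$-isotypic part and $\mc{P}_X(U) = \bigoplus_{\lambda \in U} \mc{P}_X(V_\lambda)$ lies in ${}^\perp\QCoh(\dd_X,G)_U$ by construction. Your route instead isolates the structural reason behind this: products in $\QCoh(\dd_X,G)$ are computed isotypic-component-wise (so $\mathrm{AB}4^\ast$ holds and $\QCoh(\dd_X,G)_U$ is closed under products), and you then extract $i^*$ by hand and $j_*$ via the adjoint functor theorem. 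The paper's argument is more direct and yields explicit formulas (and indeed your intersection $\ms{M}_0$ coincides with the paper's $\ms{M}(U)$), while yours makes transparent that the whole recollement is a formal consequence of the semisimplicity of $\Rep(G)$. Your parenthetical alternative via the locally unital ring $\Lambda$ is essentially the content of the remark the paper makes immediately after the proposition, except that there a genuine unital algebra is available thanks to Theorem~\ref{thm:mainprofconj}; note that your sums $e_U$, $e_{U^c}$ are only formal when $U$ is infinite, so one should interpret $\Lambda e_U \Lambda$ as the ideal generated by the $e_\lambda$ for $\lambda \in U$, etc.
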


\begin{proof}
	Showing the existence of the right adjoint $j_!$ is (by definition) equivalent to showing that $\QCoh(\dd_X,G)_U$ is a localizing subcategory, and showing the existence of $j_*$ is equivalent to showing that $\QCoh(\dd_X,G)_U$ is colocalizing. The fact that $\QCoh(\dd_X,G)_U$ follows by \cite[Proposition 2.2]{GeigleLenzingQuotientcat} from the fact that $\QCoh(\dd_X,G)_U$ admits arbitrary direct sums. 
	
	To show that $\QCoh(\dd_X,G)_U$ is colocalizing, we show that the dual of \cite[Proposition 2.2 (ii)]{GeigleLenzingQuotientcat} holds. Namely, if $\ms{M} \in \QCoh(\dd_X,G)$ then we must show that there exists an exact sequence 
	$$
	\ms{A} \rightarrow \ms{M} \rightarrow \ms{S} \rightarrow 0
	$$
	with $S \in \QCoh(\dd_X,G)_U$ and $A$ belongs to 
	$$
	{}^{\perp} \QCoh(\dd_X,G)_U = \{ \ms{B} \in \QCoh(\dd_X,G) \, | \, \Hom(\ms{B},\ms{S}) = \Ext^1(\ms{B},\ms{S}) = 0, \, \forall \ms{S} \in \QCoh(\dd_X,G)_U \}.
	$$
	Let $M' = \bigoplus_{\lambda \in U} \Gamma(X,\ms{M})_{\lambda}$ be the $G$-submodule of all isotypic components lying in $U$ and let $M(U)$ be the $\dd(X)$-submodule of $\Gamma(X,\ms{M})$ generated by $M'$. It defines a $G$-equivariant submodule $\ms{M}(U)$ of $\ms{M}$. Then $\ms{S} := \ms{M} / \ms{M}(U)$ is the largest quotient of $\ms{M}$ belonging to $\QCoh(\dd_X,G)_U$. Moreover, if we define 
	$$
	\mc{P}_{X}(U) = \bigoplus_{\lambda \in U}\mc{P}_{X}(V_\lambda)
	$$
	then $\mc{P}_{X}(U) \in {}^{\perp} \QCoh(\dd_X,G)_U$ and there exists a set $I$ such that 
	$$
	\mc{P}_X(U)^{(I)} \rightarrow \ms{M} \rightarrow \ms{S} \rightarrow 0
	$$
	is exact because $\mc{P}_X(U)^{(I)}$, the direct sum of $|I|$ copies of $\mc{P}_X(U)$, surjects onto $\ms{M}(U)$.  
\end{proof}

\begin{remark}
	If $A$ denotes the subgroup of $P$ generated by all weights of $T$ in $\C[X]$ then 
	$$
	\QCoh(\dd_X,G) = \bigoplus_{\eta \in P/A} \QCoh(\dd_X,G)(\eta)
	$$
	where $\QCoh(\dd_X,G)(\eta)$ is the full subcategory of all modules $\ms{M}$ whose $T$-weights all belong to the coset $\eta$.  
\end{remark}

Choosing a compact projective generator $\mc{Q}$ of $\QCoh(\dd_X,G)$, one can explicitly realize the functors appearing in Proposition \ref{prop:recollementofweightsXGaffine}. Namely, if $R := \End_{\QCoh(\dd_X,G)}(\mc{Q})^{\mathrm{op}}$, then there exists an idempotent $e \in R$ such that the equivalence $\QCoh(\dd_X,G) \stackrel{\sim}{\longrightarrow} \LMod{R}$ given by $\Hom_{\QCoh(\dd_X,G)}(\mc{Q}, - )$ induces equivalences 
$$
\QCoh(\dd_X,G)(U) \simeq \LMod{e R e}, \quad \QCoh(\dd_X,G)_{U} \simeq \LMod{(R / R e R)}.
$$
Under these equivalences the diagram \eqref{eq:weightsrecollmentDXsmooth} becomes  
$$
\begin{tikzcd}
\LMod{(R / R e R)} \ar[rr,hook] & & \LMod{R} \ar[rr,"e \cdot - "] \ar[ll,bend right,"{\Hom_R(R/ReR, - )}"'] \ar[ll,bend left,"{(R / R eR) \o_{R} - }"] & & \ar[ll,bend left,"{R e \otimes_{eRe} - }"] \ar[ll,bend right,"{\Hom_{eRe}(eR, - )}"'] \LMod{e R e} .
\end{tikzcd}
$$ 
This clearly illustrates that one cannot expect a similar recollement pattern for $\Coh(\dd_X,G)$ in general.

\section{Compact generators for the category of $D$-modules on a finite type Artin stack}\label{sec:proof-of-theorem-refthmdg-compact-generator}


In this section we will give a proof of Theorem \ref{thm:dg compact generator}.
This proof is very similar to that of Theorem \ref{thm:derived}, but expressed in more abstract terms. First we record the following:
\begin{lem}[\cite{drinfeld_finiteness_2013}, Lemma 10.3.9]\label{lem:drinfeld-gaitsgory}
	Given a finite type Artin stack $\cY$ with affine diagonal, there is a diagram:
	\[
\cY	\supset \mathring{\cY} \leftarrow \cZ \to X \times BG
\]
where
\begin{itemize}
	\item $\mathring{\cY} \supset \cY$ is a non-empty open substack,
	\item $\cZ \to \cY$ is a finite \'etale covering,
	\item $X$ is a scheme and $G$ a reductive group,
	\item The morphism $\cZ \to X\times BG$ is a unipotent gerbe.
\end{itemize}
\end{lem}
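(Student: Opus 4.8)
The plan is to study the generic structure of the inertia stack $I_\cY \to \cY$ and to peel off its layers --- component group, reductive quotient, and unipotent radical --- one at a time, using throughout that we are free to replace $\cY$ by a dense open substack and to pass to a finite \'etale cover.

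\emph{Making the inertia smooth.} Since $\cY$ has affine diagonal, $I_\cY\to\cY$ is an affine group scheme of finite type; replacing $\cY$ by a dense open of its reduction we may assume $\cY$ is reduced and irreducible, and by generic flatness that $I_\cY\to\cY$ is flat. In characteristic $0$ a flat affine group scheme of finite type over a reduced base is smooth, so $I:=I_\cY\to\cY$ is now smooth and affine. Shrinking further, the fibrewise identity component $I^{\circ}\subseteq I$ is a smooth open normal subgroup scheme with connected fibres, and $\Gamma:=I/I^{\circ}$ is finite \'etale over $\cY$.

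\emph{Splitting off the unipotent radical and the reductive part.} By spreading out the unipotent radical of the generic fibre of $I^{\circ}$ to a subgroup scheme over a dense open --- using that ``the geometric fibre is reductive'' is an open condition, as $\dim R_u$ is upper semicontinuous --- after a further shrinking there is a smooth closed normal subgroup scheme $U\subseteq I$ with unipotent fibres such that $I/U$ has reductive fibres. A reductive group scheme over a normal integral base is isotrivial, i.e.\ becomes constant after a finite \'etale base change (theory of reductive group schemes), and similarly the finite \'etale group $\Gamma$ together with the extension and outer-action data relating it to $(I/U)^{\circ}$ becomes constant after a finite \'etale cover. Combining these, there is a dense open $\mathring\cY\subseteq\cY$ and a finite \'etale cover $\cZ\to\mathring\cY$ such that $I_\cZ/(U|_{\cZ})$ is the \emph{constant} group scheme $G\times\cZ$ for a fixed reductive group $G$.

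\emph{Rigidification and conclusion.} Let $\mathcal{W}$ be the rigidification of $\cZ$ along the unipotent normal subgroup $U|_{\cZ}$ of its inertia; then $\cZ\to\mathcal{W}$ is a gerbe banded by $U|_{\cZ}$, that is, a unipotent gerbe, and $\mathcal{W}$ has constant inertia $G\times\mathcal{W}$. Rigidifying $\mathcal{W}$ further along $G$ produces an algebraic space which, after one last shrinking, is a scheme $X$, and $\mathcal{W}\to X$ is a $G$-gerbe; after a further finite \'etale cover --- which we absorb into $\cZ$ --- this gerbe becomes neutral, so $\mathcal{W}\cong X\times BG$, and the composite $\cZ\to\mathcal{W}=X\times BG$ is the required unipotent gerbe. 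The two delicate points, where the argument is more than bookkeeping, are spreading out the unipotent radical of $I^{\circ}$ as an honest normal subgroup \emph{scheme} rather than merely fibrewise, and the very last step: trivializing the residual $G$-gerbe $\mathcal{W}\to X$ after passing to a dense open and a finite \'etale cover, i.e.\ killing a cohomological (Brauer-type) obstruction class valued in $Z(G)$ --- this is precisely where one exploits the freedom to shrink $X$. Everything else is a routine combination of generic flatness, the automatic smoothness of affine group schemes in characteristic $0$, and isotriviality of reductive group schemes over a normal base.
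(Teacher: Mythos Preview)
The paper does not give its own proof of this lemma: it is quoted verbatim as Lemma~10.3.9 of Drinfeld--Gaitsgory and used as a black box, so there is nothing to compare your argument against in this text.

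Your outline is the standard one (and, in broad strokes, the one Drinfeld--Gaitsgory use): generically smooth the inertia, peel off the unipotent radical, trivialize the reductive quotient \'etale-locally, and rigidify. One point deserves more care. You pass to a finite \'etale cover $\cZ\to\mathring{\cY}$ and then assert that $I_{\cZ}/(U|_{\cZ})$ is the constant group $G$. But for a representable finite \'etale morphism of stacks, the inertia of the source only injects into the pullback of the inertia of the target; it need not be equal to it (think of $\mathrm{pt}\to B(\Z/2)$). So it is not automatic that $U|_{\cZ}$ sits inside $I_{\cZ}$, nor that the quotient is what you want. The fix is to choose the cover to be \emph{inertia-preserving}: take it to be pulled back from a finite \'etale cover of the full rigidification (an algebraic space), so that $I_{\cZ}$ is literally the pullback of $I_{\mathring{\cY}}$. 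Equivalently, first rigidify along all of $I$ to get an algebraic space, perform all the needed finite \'etale covers there (to constantize the reductive group scheme, trivialize the component-group torsor, and split the residual $Z(G)$-gerbe), and only then pull back to $\mathring{\cY}$. With that adjustment your argument goes through; the two ``delicate points'' you flag are genuinely the nontrivial inputs.
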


\begin{prop}\label{prop:strat of stacks}
Suppose $\cX$ is a finite type Artin stack with affine diagonal. Then there is a sequence of closed substacks 
	\[
	\cY_0 \subseteq \ldots \subseteq \cY_n = \cX
	\]
such that the successive complements $\cZ_i = \cY_i - \cY_{i-1}$ have the property that $\bD(\cX_i)$ admits a compact generator.
\end{prop}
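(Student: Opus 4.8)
The plan is to prove Proposition \ref{prop:strat of stacks} by Noetherian induction on the stack $\cX$, extracting a ``nice'' open substack at each stage using Lemma \ref{lem:drinfeld-gaitsgory} and then recursing on the closed complement. More precisely, I would argue as follows. Since $\cX$ is a finite type Artin stack with affine diagonal, it admits a stratification by reduced locally closed substacks, and the underlying topological space is Noetherian; so it suffices to produce, for every non-empty closed substack $\cX' \subseteq \cX$, a non-empty open substack $\mathring{\cX}' \subseteq \cX'$ whose derived category $\bD(\mathring{\cX}')$ admits a compact generator. Iterating this on the (strictly smaller) closed complements and using that the process terminates by Noetherianity yields the desired finite chain $\cY_0 \subseteq \dots \subseteq \cY_n = \cX$ with each layer $\cZ_i = \cY_i - \cY_{i-1}$ having $\bD(\cZ_i)$ compactly generated.

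The key step is therefore the existence of the single good open substack, which is where Lemma \ref{lem:drinfeld-gaitsgory} enters. Apply it to $\cY = \cX'$ (or rather to each connected component, or to $\cX'_{\red}$): we get a non-empty open $\mathring{\cX'} \subseteq \cX'$, a finite \'etale cover $\cZ \to \mathring{\cX'}$, a scheme $X$, a reductive group $G$, and a unipotent gerbe $\cZ \to X \times BG$. Now $\bD(X \times BG)$ has a compact generator: $X$ is a scheme of finite type so $\bD(X)$ is compactly generated by a single object (one can take the $\dd$-module pushforward of $\mc O$ from a suitable open cover, or invoke that a quasi-compact quasi-separated scheme has a single compact generator in $\bD$), and tensoring with the regular representation of $G$ (equivalently, descent along $X \to X\times BG$ with $G$ reductive) produces a compact generator of $\bD(X \times BG)$. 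A unipotent gerbe induces an equivalence on derived categories of $\dd$-modules (the relevant statement being that pullback along a unipotent gerbe is an equivalence after inverting nothing — $\dd$-modules do not see unipotent gerbes), so $\bD(\cZ) \simeq \bD(X\times BG)$ also has a compact generator. Finally, the finite \'etale map $\pi \colon \cZ \to \mathring{\cX'}$ has $\pi_\ast = \pi_!$ left adjoint to $\pi^!$ (and $\pi^!$ also commutes with colimits and is conservative, since $\pi$ is finite \'etale hence in particular proper and smooth), so the pushforward of a compact generator of $\bD(\cZ)$ is a compact generator of $\bD(\mathring{\cX'})$: compactness is preserved because $\pi^!$ preserves colimits, and generation follows because $\pi^!$ is conservative and $R\Hom(\pi_\ast c, \ms M) \simeq R\Hom(c, \pi^! \ms M)$.

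I would expect the main obstacle — or at least the point requiring the most care — to be the behavior of $\bD$ under the three kinds of morphisms in Lemma \ref{lem:drinfeld-gaitsgory}: checking that a unipotent gerbe genuinely induces an equivalence of (unbounded) derived categories of $\dd$-modules, and that finite \'etale pushforward sends compact generators to compact generators. Both are ``well-known'' in the $\dd$-module literature (the gerbe statement because $B\mathbb{G}_a$, and more generally any unipotent group, acts trivially on $\dd$-modules in the appropriate sense; the \'etale statement by the six-functor formalism), but one should cite them carefully — e.g. from \cite{gaitsgory_crystals_2011} or \cite{drinfeld_finiteness_2013} — rather than reprove them. A secondary bookkeeping point is the reduction to $\cX$ reduced and the handling of connected components, so that Lemma \ref{lem:drinfeld-gaitsgory} can be applied componentwise; and one must note that $\bD(\cZ_i)$ only depends on $(\cZ_i)_{\red}$, so working with reduced substacks throughout loses nothing. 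With these ingredients in place the Noetherian induction is routine, and Theorem \ref{thm:dg compact generator} then follows by the same gluing argument as in the proof of Theorem \ref{thm:derived}: lift compact generators of each layer to compact objects of $\bD(\cX)$ and take their direct sum.
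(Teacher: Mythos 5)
Your proposal is correct and follows essentially the same route as the paper: an inductive (Noetherian) peeling-off of the open substack supplied by Lemma \ref{lem:drinfeld-gaitsgory}, a compact generator for $\bD(X\times BG)$, the unipotent-gerbe equivalence, and pushforward along the finite \'etale cover (using that $f^!\simeq f^\ast$ is conservative and colimit-preserving) to transport the generator. The only cosmetic difference is that you spell out the Noetherian-induction bookkeeping and the reduction to reduced substacks, which the paper leaves implicit.
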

\begin{proof}
We define the substacks $\cY_i$ and $\cX_i$ inductively as follows. We set $\cY_n = \cX$. Then define $\cX_i$ to be the open substack $\mathring{\cY_i}$ guaranteed by Lemma \ref{lem:drinfeld-gaitsgory} and $\cY_{i-1} = \cY_i - \cX_i$ to be the complementary (reduced) closed substack.

Now set $\cY = \cY_i$ and use the notation in Lemma \ref{lem:drinfeld-gaitsgory}. By Lemma \ref{lem:special case} below, we have that $\bD(X \times BG) \simeq \bD(X) \otimes \bD(BG)$ has a compact generator. As the morphism $\widetilde{\cU} \to X\times BG$ is a unipotent gerbe, it induces an equivalence $\bD(\widetilde{\cU}) \simeq \bD(X\times BG)$ (see \cite{drinfeld_finiteness_2013} Lemma 10.3.6). Finally, as $f$ is a finite \'etale cover, the functor $f^! \simeq f^\ast: \bD(\cU) \to \bD(\widetilde{\cU})$ is conservative, and thus $f_\ast$ takes the compact generator of $\bD(\widetilde{U})$ to the compact generator of $\bD(\cU)$ as required. 
\end{proof}

\begin{lem}\label{lem:special case}
	The DG category $\bD(\cX)$ has a compact generator in the following cases:
	\begin{enumerate}
		\item $\cX = X$ is a finite type scheme,
		\item $\cX = BG$ is the classifying stack of an affine algebraic group.
	\end{enumerate}
\end{lem}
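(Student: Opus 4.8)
The plan is to treat the two cases separately, since each reduces to a known compact generation statement together with the observation that a finite-rank generator can be replaced by a single object.

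For case (1), when $\cX = X$ is a finite type scheme, the category $\bD(X)$ is known to be compactly generated; indeed, it is generated by the (countable, if $X$ is not affine) family $\{\ind(\cF)\}$ as $\cF$ ranges over a set of compact generators of $\QC(X)$. First I would cover $X$ by finitely many affine opens $U_1,\ldots,U_m$; on each affine piece $U_k$, the single object $\ind(\cO_{U_k})=\dd_{U_k}$ is a compact generator of $\bD(U_k)$. Using the open-closed recollement triangle $i_{\ast}i^{!}\ms{M}\to\ms{M}\to j_{\ast}j^{!}\ms{M}$ repeatedly along this affine cover (exactly as in the proof of Proposition \ref{prop:stratification} and Theorem \ref{thm:derived}), one builds a compact generator of $\bD(X)$ out of the compact generators on the strata: push the generator of each locally closed affine stratum forward to a compact object of $\bD(X)$, take the direct sum, and check via the triangles that an object killed by this sum has vanishing $!$-restriction to every stratum, hence is zero. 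Alternatively, and more cleanly, one cites that $\bD(X)$ is compactly generated with a finite set of generators (one per affine in a finite cover) and invokes the elementary fact that a finite direct sum of compact generators is again a compact generator.

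For case (2), when $\cX = BG$ for $G$ an affine algebraic group, I would argue as follows. The category $\bD(BG)$ is equivalent to the (derived) category of weak $G$-equivariant $\dd$-modules on a point, i.e. modules over $\cU\mf g$ internal to $\Rep(G)$; concretely, $\bP_{\pt}(V)=\ind(V)$ for $V\in\Rep(G)$ forms a set of compact generators as $V$ ranges over irreducibles. When $G$ is reductive this is a countable family indexed by $\Irr(G)$, and the content is to replace it by one object. Here I would invoke Theorem \ref{thm:derived} (or rather the special case $X=\pt$), which already asserts that a single $\bP_{\pt}(V)$ suffices; so case (2) for reductive $G$ is literally a special case of a theorem proved earlier in the paper — but to avoid circularity (since Proposition \ref{prop:strat of stacks} is used to prove Theorem \ref{thm:dg compact generator}, which is stated as more general than Theorem \ref{thm:derived}) I would instead give the direct argument: for reductive $G$, $\Rep(G)$ is semisimple, and one can run a version of the torus argument — or simply note that $\bD(BG)$ decomposes over the blocks and reduce to finitely many. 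For general (not necessarily reductive) $G$, factor $G$ through its reductive quotient $G/U$ with $U$ the unipotent radical: the map $BG\to B(G/U)$ is a unipotent gerbe-type morphism inducing an equivalence $\bD(BG)\simeq\bD(B(G/U))$ by \cite[Lemma 10.3.6]{drinfeld_finiteness_2013}, reducing to the reductive case.

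The main obstacle is the reductive case of (2): showing that finitely many induced modules $\ind(V_1),\ldots,\ind(V_k)$ generate $\bD(BG)$. The subtlety is that $\cU\mf g$ is infinite-dimensional, so even though $\Rep(G)$ is semisimple the category $\bD(BG)$ is not semisimple, and one must control how many "blocks" there are — equivalently, understand the action of the center of $\cU\mf g$ (Harish-Chandra center). I expect the cleanest route is: the center $Z(\cU\mf g)$ is a polynomial ring, the category $\bD(BG)$ is supported (via the center) over $\Spec Z(\cU\mf g)$ modulo the finite Weyl group action, but generation is really about the $\Ext$-finiteness that forces any nonzero object to have a nonzero $\mf g$-invariant-type multiplicity space after enough shifts — and this is controlled by a \emph{single} well-chosen projective, e.g. $\ind(V)$ for $V$ containing every "small enough" irreducible, using that any finitely generated $(\cU\mf g,G)$-module involves only finitely many irreducibles in its socle layers. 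I would flag that this step genuinely requires an argument (Noetherianity of $\cU\mf g\rtimes G$, or the block decomposition of \cite{GunnighamAbelian} in the case $G$ semisimple acting trivially) and is the place where the proof has real content rather than being formal bookkeeping.
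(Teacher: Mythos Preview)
Your treatment of case (1) via an affine cover and iterated recollement is correct but more elaborate than needed. The paper instead cites Bondal--Van den Bergh \cite{bondal_generators_2003} for a \emph{single} compact generator $\cF$ of $\QC(X)$, and then observes that since $\res:\bD(X)\to\QC(X)$ is conservative and colimit-preserving \cite{gaitsgory_crystals_2011}, its left adjoint $\ind$ takes $\cF$ to a compact generator of $\bD(X)$. No stratification or gluing is required.

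Case (2) is where you have a genuine gap. You correctly flag the difficulty, but the route you propose --- controlling which irreducibles appear via the center of $\cU\mf g$, invoking Noetherianity, or borrowing the block decomposition of \cite{GunnighamAbelian} --- does not close: there are infinitely many central characters, and the cited block decomposition concerns $\QCoh(\dd_{\mf g}, G)$ rather than $\bD(BG)$. The paper's argument is a one-liner and works uniformly for \emph{any} affine algebraic group, so your reduction to the reductive case via the unipotent gerbe $BG \to B(G/U)$ is also unnecessary. Namely: if $\pi: \pt \to BG$ is the tautological torsor, then $\pi_\ast(\dd_{\pt})$ is a compact generator, because $R\End_{\bD(BG)}(\pi_\ast \dd_{\pt}) \simeq C_\ast(G)$ exhibits $\bD(BG)$ as the module category $C_\ast(G)\mmod$ of a single dg-algebra \cite[Section~7.2.2]{drinfeld_finiteness_2013}. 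This is simply a different (and much more direct) compact generator than anything built from finitely many $\ind(V)$, which is why your search in that direction stalls.
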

\begin{proof}
	\begin{enumerate}
		\item First we recall that the derived category $\QC(X)$ of a quasi-coherent sheaves has a compact generator $\cF$ \cite{bondal_generators_2003} Theorem 3.1.1. By \cite{gaitsgory_crystals_2011} Proposition 3.4.11, the forgetful functor $\res:\bD(X) \to \QC(X)$ is conservative and colimit preserving. It follows that the left adjoint $\ind$ takes the compact generator $\cF$ to a compact generator of $\bD(X)$. 
		\item It is well-known that $\pi_\ast(\dd_{pt})$ is a compact generator, where $\pi:pt \to BG$ is the tautological $G$-bundle. Indeed, we have $R\End_{\bD(pt)}(\pi_\ast (\dd_{pt})) \simeq C_\ast(G)$ and thus $\bD(BG) \simeq C_\ast(G)-mod$ is identified with the category of dg modules for the dg algebra of chains on $G$ (see \cite{drinfeld_finiteness_2013} Section 7.2.2).
	\end{enumerate}
\end{proof}

The inductive step is then taken care of by the following:
\begin{lem}\label{lem:open closed}
	Let $\cZ$ be an closed substack of $\cX$ and $\cU$ the open complement. If the DG categories $\bD(\cU)$ and $\bD(\cZ)$ have a compact generator, then so does $\bD(\cX)$.
\end{lem}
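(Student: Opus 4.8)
The plan is to mimic the proof of Theorem~\ref{thm:derived} in the more abstract setting of Artin stacks, using the recollement (gluing) triangle for an open-closed decomposition. Let $j \colon \cU \hookrightarrow \cX$ be the open embedding and $i \colon \cZ \hookrightarrow \cX$ the complementary closed embedding. Recall that for any $\ms{M} \in \bD(\cX)$ there is an exact triangle $i_\ast i^! \ms{M} \to \ms{M} \to j_\ast j^! \ms{M}$, and that $j^! = j^\ast$ is left adjoint to $j_\ast$ while $i^!$ is right adjoint to $i_\ast = i_!$. Let $F_\cU$ be a compact generator of $\bD(\cU)$ and $F_\cZ$ a compact generator of $\bD(\cZ)$.

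First I would produce two compact objects of $\bD(\cX)$: the object $i_\ast F_\cZ$, which is compact because $i_\ast = i_!$ has a colimit-preserving (hence compact-object-preserving) right adjoint $i^!$ — equivalently $i_\ast$ preserves compactness since $i$ is proper; and an object $\wt{F}_\cU$ extending $F_\cU$ across $\cX$. For the latter, since $\bD(\cX)$ is compactly generated and $j^\ast$ sends the compact generators of $\bD(\cX)$ to a generating family of compact objects of $\bD(\cU)$, one can realize $F_\cU$ as a retract of $j^\ast$ of a compact object $\wt{F}_\cU \in \bD(\cX)$; alternatively (and more cleanly) one may take $\wt{F}_\cU = j_! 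F_\cU$ if the left adjoint $j_!$ is available and preserves compactness, but to stay within the formal properties listed in the excerpt I would instead just invoke compact generation of $\bD(\cX)$ to lift. Then I claim $Q := i_\ast F_\cZ \oplus \wt{F}_\cU$ is a compact generator of $\bD(\cX)$. Compactness is clear since it is a finite direct sum of compact objects.

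For generation, suppose $\ms{M} \in \bD(\cX)$ satisfies $R\Hom_{\bD(\cX)}(Q, \ms{M}) \simeq 0$; I must show $\ms{M} \simeq 0$. From $R\Hom(\wt{F}_\cU, \ms{M}) \simeq 0$ and the retraction $F_\cU \lhd j^\ast \wt{F}_\cU$ together with adjunction $R\Hom_{\bD(\cU)}(j^\ast \wt{F}_\cU, j^\ast \ms{M}) \simeq R\Hom_{\bD(\cX)}(\wt{F}_\cU, j_\ast j^\ast \ms{M})$, one deduces $R\Hom_{\bD(\cU)}(F_\cU, j^\ast \ms{M}) \simeq 0$ — here one uses that applying $R\Hom(\wt F_\cU,-)$ to the gluing triangle and noting that $R\Hom(\wt F_\cU, i_\ast i^!\ms{M})$ may not vanish a priori is the subtlety, so instead I would argue as in the excerpt's induction: we have not yet used the other summand. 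Cleanly: from $R\Hom(i_\ast F_\cZ, \ms{M}) \simeq R\Hom_{\bD(\cZ)}(F_\cZ, i^! \ms{M}) \simeq 0$ and the fact that $F_\cZ$ generates $\bD(\cZ)$, we get $i^! \ms{M} \simeq 0$, hence $i_\ast i^! \ms{M} \simeq 0$, so the gluing triangle gives $\ms{M} \xrightarrow{\sim} j_\ast j^\ast \ms{M}$. Then $R\Hom_{\bD(\cU)}(F_\cU, j^\ast \ms{M})$ is a retract of $R\Hom_{\bD(\cU)}(j^\ast \wt{F}_\cU, j^\ast \ms{M}) \simeq R\Hom_{\bD(\cX)}(\wt{F}_\cU, j_\ast j^\ast \ms{M}) \simeq R\Hom_{\bD(\cX)}(\wt{F}_\cU, \ms{M}) \simeq 0$, so $j^\ast \ms{M} \simeq 0$ since $F_\cU$ generates $\bD(\cU)$. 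Therefore $\ms{M} \simeq j_\ast j^\ast \ms{M} \simeq 0$, as desired.

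The main obstacle I anticipate is the construction and compactness of the lift $\wt{F}_\cU$: one must know that $j^\ast \colon \bD(\cX) \to \bD(\cU)$ sends some set of compact generators to compact generators of $\bD(\cU)$ (so that $F_\cU$, or a shift/sum thereof, is built from their images and can be lifted to a compact object), or else that $j_!$ exists and preserves compact objects. This is where one genuinely uses that $\bD(-)$ of finite-type Artin stacks with affine diagonal is compactly generated and that the six-functor formalism restricts well along opens — a fact available by \cite{drinfeld_finiteness_2013} but worth citing carefully. The rest of the argument is the formal diagram-chase above and is essentially identical in structure to the proof of Theorem~\ref{thm:derived}, just with one closed stratum and its open complement in place of the full stratification (indeed, Proposition~\ref{prop:strat of stacks} plus iterated application of this lemma recovers Theorem~\ref{thm:dg compact generator}).
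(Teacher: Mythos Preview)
Your proof is correct and follows essentially the same approach as the paper. The paper packages the lift $\wt{F}_\cU$ as a finite direct sum $L_{i_1} \oplus \cdots \oplus L_{i_n}$ of compact generators of $\bD(\cX)$ (from \cite{drinfeld_finiteness_2013}) whose restrictions contain $F_\cU$ in their Karoubian--triangulated envelope and hence generate $\bD(\cU)$---this sidesteps the need to literally realize $F_\cU$ as a retract of $j^\ast\wt{F}_\cU$, which is the one step you flagged as delicate---but otherwise the argument via the gluing triangle and the dichotomy on $i^!\ms{M}$ is identical.
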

\begin{proof}
	Let $j: \cU \hookrightarrow \cX$ and $i: \cZ \hookrightarrow \cX$ denote the inclusion morphisms. By assumption, $\bD(\cU)$ has a compact generator $\ms{M}$, and $\bD(\cZ)$ has a compact generator $\ms{N}$. By \cite{drinfeld_finiteness_2013} Theorem 0.2.2, $\bD(\cX)$ has a collection of compact generators $\{L_i\}_{i \in I}$ indexed by a set $I$. It follows immediately from adjunction properties that $\{j^\ast(L_i)\}_{i \in I}$ form a set of compact generators of $\bD(\cU)$. Moreover, as the object $\ms{M}$ is compact, it must be contained in the Karoubian-triangulated envelope of some finite subset of generators $\{j^\ast L_{i_1}, \ldots, j^\ast L_{i_n}\}$, which in turn must generate $\bD(\cU)$. 
	
	We will show that the object 
	\[
	K = i_\ast \ms{N} \oplus L_{i_1} \oplus \ldots \oplus L_{i_n}
	\]
	is a compact generator of $\bD(\cX)$. Indeed, given any object $S \in \bD(\cX)$, there is a distinguished triangle 
	\[
	i_\ast i^! S \to S \to j_\ast j^\ast S \xrightarrow{+1}
	\]
	We need to show that $R\Hom(K,S) \neq 0$. First suppose that $i^!S \neq 0$. Then 
	\[
	R\Hom(i_\ast \ms{N}, S) = R\Hom(\ms{N},i^!S) \neq 0
	\]
	as $\ms{N}$ is a generator of $\bD(\cZ)$. Now suppose that $i^! S=0$. Then $S \simeq j_\ast j^\ast S$, so we have
	\[
	R\Hom(L_{i_1} \oplus \ldots \oplus L_{i_n},S) \simeq R\Hom(j^\ast L_{i_1} \oplus \ldots \oplus j^\ast L_{i_n}, j^\ast S) \neq 0
	\]
	as $L_{i_1} \oplus \ldots L_{i_n}$ is a generator of $\bD(\cU)$. In either case we have $R\Hom(K,S) \neq 0$ as required. 
\end{proof}

Theorem \ref{thm:dg compact generator} now follows directly from Proposition \ref{prop:strat of stacks} together with Lemma \ref{lem:open closed}.

\def\cprime{$'$} \def\cprime{$'$} \def\cprime{$'$} \def\cprime{$'$}
\def\cprime{$'$} \def\cprime{$'$} \def\cprime{$'$} \def\cprime{$'$}
\def\cprime{$'$} \def\cprime{$'$} \def\cprime{$'$} \def\cprime{$'$}
\def\cprime{$'$} \def\cprime{$'$}


\begin{thebibliography}{25}
	
		\bibitem{beilinson_quantization_????}
	A.~Beilinson and V.~Drinfeld.
	\newblock {\em Quantization of {Hitchin}'s integrable system and {Hecke}
		eigensheaves (preliminary version)}.
	


	
	
	\bibitem{BellBoosSemisimple}
	G.~Bellamy and M.~Boos.
	\newblock Semi-simplicity of the category of admissible {$D$}-modules.
	\newblock {\em arXiv}, 1709.08986v3. {\em To appear in the Kyoto Journal of Mathematics.}
	
	\bibitem{SomeComb}
	G.~Bellamy and V.~Ginzburg.
	\newblock Some combinatorial identities related to commuting varieties and
	{H}ilbert schemes.
	\newblock {\em Math. Ann.}, 355(3):801--847, 2013.
	
	\bibitem{mirabolicHam}
	G.~Bellamy and V.~Ginzburg.
	\newblock Hamiltonian reduction and nearby cycles for mirabolic {$D$}-modules.
	\newblock {\em Adv. Math.}, 269:71--161, 2015.
	
	
		\bibitem{bernstein_equivariant_1994}
	J.~Bernstein and V.~Lunts.
	\newblock {\em Equivariant sheaves and functors}, volume 1578 of {\em Lecture
		{Notes} in {Mathematics}}.
	\newblock Springer-Verlag, Berlin, 1994.
	
	\bibitem{bernstein_localization_1995}
	J.~Bernstein and V.~Lunts.
	\newblock Localization for derived categories of $(\mathfrak{g},k)$-modules.
	\newblock {\em Journal of the American Mathematical Society}, 8(4):819, oct
	1995.
	
	\bibitem{BjorkBook}
	J-E. Bj{\"o}rk.
	\newblock {\em Rings of Differential Operators}, volume~21 of {\em
		North-Holland Mathematical Library}.
	\newblock North-Holland Publishing Co., Amsterdam, 1979.
	
	
	\bibitem{bondal_generators_2003}
	A.~Bondal and M.~Van den Bergh.
	\newblock Generators and representability of functors in commutative and
	noncommutative geometry.
	\newblock {\em Moscow Mathematical Journal}, 3(1):1--36, 2003.
	
	\bibitem{BdMonvel}
	L.~Boutet~de Monvel.
	\newblock {${\mathscr D}$}-modules holon\^{o}mes r\'{e}guliers en une variable.
	\newblock In {\em Mathematics and physics ({P}aris, 1979/1982)}, volume~37 of
	{\em Progr. Math.}, pages 313--321. Birkh\"{a}user Boston, Boston, MA, 1983
	
	\bibitem{MinimalSupport}
	P.~Etingof, E.~Gorsky, and I.~Losev.
	\newblock Representations of rational {C}herednik algebras with minimal support
	and torus knots.
	\newblock {\em Adv. Math.}, 277:124--180, 2015.
	
	\bibitem{GeigleLenzingQuotientcat}
	W.~Geigle and H.~Lenzing.
	\newblock Perpendicular categories with applications to representations and
	sheaves.
	\newblock {\em J. Algebra}, 144(2):273--343, 1991.
	
		
	\bibitem{drinfeld_finiteness_2013}
	V.~Drinfeld and D.~Gaitsgory.
	\newblock On {Some} {Finiteness} {Questions} for {Algebraic} {Stacks}.
	\newblock {\em Geometric and Functional Analysis}, 23(1):149--294, 2013.
	
	\bibitem{gaitsgory_crystals_2011}
	D.~Gaitsgory and N.~Rozenblyum.
	\newblock Crystals and {D}-modules.
	\newblock {\em Preprint: arXiv:1111.2087}, November 2011.
	
	\bibitem{gunningham_generalized_}
	S.~Gunningham.
	\newblock Generalized quantum {H}amiltonian reduction and cuspidal {D}-modules
	\newblock {\em In preparation}	
	
	\bibitem{GunnighamAbelian}
	S.~Gunningham.
	\newblock Generalized {S}pringer theory for {$D$}-modules on a reductive {L}ie
	algebra.
	\newblock {\em Selecta Math. (N.S.)}, 24(5):4223--4277, 2018.
	
	

	\bibitem{HesselinkDesing}
	W.~H. Hesselink.
	\newblock Desingularizations of varieties of nullforms.
	\newblock {\em Invent. Math.}, 55(2):141--163, 1979.
	
	\bibitem{HottaKashiwara}
	R.~Hotta and M.~Kashiwara.
	\newblock The invariant holonomic system on a semisimple {L}ie algebra.
	\newblock {\em Invent. Math.}, 75(2):327--358, 1984.
	
	\bibitem{HTT}
	R.~Hotta, K.~Takeuchi, and T.~Tanisaki.
	\newblock {\em {$D$}-modules, Perverse Sheaves, and Representation Theory},
	volume 236 of {\em Progress in Mathematics}.
	\newblock Birkh\"auser Boston Inc., Boston, MA, 2008.
	\newblock Translated from the 1995 Japanese edition by Takeuchi.
	
	\bibitem{KacNilpotentorbits}
	V.~G. Kac.
	\newblock Some remarks on nilpotent orbits.
	\newblock {\em J. Algebra}, 64(1):190--213, 1980.
	
	\bibitem{binarycubicDmod}
	A.~C. {L\"orincz}, C.~{Raicu}, and J.~{Weyman}.
	\newblock {Equivariant {$D$}-modules on binary cubic forms}.
	\newblock {\em arXiv}, 1712.09932, 2017.
	
	\bibitem{Luna}
	D.~Luna.
	\newblock Slices \'etales.
	\newblock In {\em Sur les groupes alg\'ebriques}, pages 81--105. Bull. Soc.
	Math. France, Paris, M\'emoire 33. Soc. Math. France, Paris, 1973.
	
	\bibitem{RaicuSymmetric}
	C.~Raicu.
	\newblock Characters of equivariant {$D$}-modules on spaces of matrices.
	\newblock {\em arXiv}, 1507.06621v2, 2015.
	
	\bibitem{RaicuVeronses}
	C.~Raicu.
	\newblock Characters of equivariant {$D$}-modules on {V}eronese
	cones.
	\newblock {\em Trans. Amer. Math. Soc.}, 369(3):2087--2108, 2017.
	
	\bibitem{SchwarzLiftingHomo}
	G.~W. Schwarz.
	\newblock Lifting smooth homotopies of orbit spaces.
	\newblock {\em Inst. Hautes \'Etudes Sci. Publ. Math.}, (51):37--135, 1980.
	
	\bibitem{PopovVinberg}
	\`E.~B. Vinberg and V.~L. Popov.
	\newblock Invariant theory.
	\newblock In {\em Algebraic geometry, 4 ({R}ussian)}, Itogi Nauki i Tekhniki,
	pages 137--314, 315. Akad. Nauk SSSR, Vsesoyuz. Inst. Nauchn. i Tekhn.
	Inform., Moscow, 1989.
	


	
\end{thebibliography}
\end{document}